\date{}
\newtheorem{theorem}{Theorem}
\newtheorem*{theorem*}{Theorem}
\newtheorem{defi}{Definition}
\newtheorem{lemma}{Lemma}
\newtheorem{cor}{Corollary}
\newtheorem{claim}{Claim}
\theoremstyle{remark} 
\newtheorem{case}{Case}
\newcommand\numberthis{\addtocounter{equation}{1}\tag{\theequation}}
\newcommand{\q}[1]{q\left( #1 \right)}
\newcommand{\oversetbrace}[2]{\overset{#1}{\overbrace{#2}}}
\begin{document}

\makeatletter\@addtoreset{case}{proof}\makeatother
\title{Improved bounds on the multicolor Ramsey numbers of paths and even cycles}

\author{ 
Charlotte Knierim
	\thanks{
		Department of Computer Science, ETH Zurich, Switzerland.
		Email: {\tt cknierim@ethz.ch}.
	}
\and
Pascal Su
	\thanks{
		Department of Computer Science, ETH Zurich, Switzerland. 
		Email: {\tt sup@inf.ethz.ch}.
	}
}

\maketitle
\begin{abstract}
We study the multicolor Ramsey numbers for paths and even cycles, $R_k(P_n)$ and $R_k(C_n)$, which are the smallest integers $N$ such that every coloring of the complete graph $K_N$ has a monochromatic copy of $P_n$ or $C_n$ respectively. For a long time, $R_k(P_n)$ has only been known to lie between $(k-1+o(1))n$ and $(k + o(1))n$. A recent breakthrough by S\'ark\"ozy and later improvement by Davies, Jenssen and Roberts give an upper bound of $(k - \frac{1}{4} + o(1))n$. 
We improve the upper bound to $(k - \frac{1}{2}+ o(1))n$. 
Our approach uses structural insights in connected graphs without a large matching. These insights may be of independent interest.

\end{abstract}

\section{Introduction}

A classical theorem by Ramsey from 1930~\cite{ramsey1930problem} proves the existence of (finite) Ramsey numbers. The multicolor Ramsey number $R(H_1,H_2,\ldots,H_k)$ is defined as the smallest positive integer $N$ such that for every coloring of the edges of the complete graph $K_N$ with $k$ colors there is a  monochromatic subgraph $H_i$ in the $i$-th color. For convenience, we write $R_k(H)$ if $H_i = H$, for all $i\in [k]$.

 The study of Ramsey numbers started with complete graphs but has been extended to general graphs and has found the interest of many researchers. Several surveys on the topic can be found in~\cite{ conlon2015recent,graham1990ramsey,radziszowski1994small}. Ramsey numbers for complete graphs are known to grow exponentially and are hard to analyze. Already the base of the exponential term is very much unknown, the most recent bound was by Conlon~\cite{conlon2009new}.

One of the most natural families of graphs to study are paths $P_n$ and cycles $C_n$. The two-color Ramsey numbers for paths are known since 1967~\cite{gerencser1967ramsey}. For more colors the problem is more difficult and it took until 2007 for Gy\'arf\'as, Ruszink\'o, S\'ark\"ozy and Szemer\'edi to prove that $R_3(P_n) = 2n + \mathcal{O}(1)$~\cite{Gyarfas2007}. This progress was initiated with the use of Szemer\'edi's regularity lemma~\cite{szemeredi1975regular}. The regularity lemma has proven to be a strong tool in many applications of extremal graph theory. Figaj and Łuczak~\cite{figaj2007ramsey} used it to show that in the case of Ramsey numbers asymptotically, avoiding \textit{connected matchings}, paths and cycles is the same (see Lemma~\ref{lem:CycMat} below). A matching $M$ is connected in $G$ if all edges
of $M$ are in the same component of $G$. The idea of using these connected matchings was suggested by Łuczak~\cite{luczak1999r}. This method has since been used in a series of papers (see e.g.~\cite{benevides20093,figaj2007ramsey,gyarfas2007three,luczak2012multi}).

 These results show that finding large connected matchings is an essential step towards understanding the Ramsey numbers of paths and even cycles.  
 
  For $k$ colors, an easy application of the Erd\H{o}s-Gallai extremal theorem on each color class gives a bound of $R_k(P_n) \le k n$ for even $n$ . 
 This was best known until a recent breakthrough by S\'ark\"ozy~\cite{sarkozy2016multi}, who further improved this to $(k- \frac{k}{16 k^3 +1} )n$. Davies, Jenssen and Roberts~\cite{davies2017multicolour} refined his ideas to get $R_k(P_n) \le \left( k-\frac{1}{4} + \frac{1}{2k} \right) n$. 
 
 All of these recent results focus on the $k$-color Ramsey number for paths and obtain the bounds for connected matchings in similar ways. As graphs without connected matching seem to be conceptually easier to analyze than graphs without paths our main improvement comes from considering the former.

\paragraph{Contribution} In this paper, we analyze the structural properties of graphs not containing large connected matchings as a subgraph. We show the vertices can be very clearly partitioned into categories of `low', `intermediate' and `high' degree vertices. In particular, if a connected component is large compared to $n$, then only a small number of vertices have high degree. We introduce a tool for counting edges in such graphs. 

While the arguments in the two most recent papers on $R_k(P_n)$ revolve around the overlap of dense connected components, we add the analysis on large connected components. This is important because the construction for the lower bound by Yongqi, Yuansheng, Feng and Bingxi~\cite{yongqi2006new}, who showed that for all $k\ge 3$ we have 
$R_k(C_n) \ge  \left( k-1 +o(1) \right) n ,$  also contains these large components. Note that the bound is given for cycles but easily extends to paths. We believe this is an essential step towards achieving exact bounds for the multicolor Ramsey numbers for paths and cycles.

For this, we consider the following variant of finding monochromatic connected matchings in $k$-colored dense graphs. This will allow us to generalize our results to paths and cycles using the regularity lemma. 

\begin{theorem}
	Let $k\geq 4$ be a positive integer and  $\varepsilon,\delta$ constants, such that $0< \varepsilon \leq 1/2$ and  $0\leq\delta<\frac{\varepsilon^3}{3k^2}$. Then for every even integer $n\geq 4$, every $k$-colored graph $G$ with $v(G)> (k-1/2+\varepsilon)n$ and $e(G)\geq (1-\delta)\binom{v(G)}{2}$ has a monochromatic connected matching of size $n/2$. 
	\label{thm:main}
\end{theorem}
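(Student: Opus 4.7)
The plan is to argue by contradiction. Suppose that for every color $i \in [k]$ the graph $G_i$ of edges of color $i$ fails to contain a connected matching of size $n/2$, so that every component $C$ of $G_i$ satisfies $\nu(C) \leq n/2 - 1$. Writing $N = v(G)$, the goal is to derive an upper bound on $e(G) = \sum_i e(G_i)$ that contradicts the density assumption $e(G) \geq (1-\delta)\binom{N}{2}$.

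The first step is a per-color structural analysis. In each component $C$ of $G_i$, the constraint $\nu(C) \leq n/2 - 1$ triggers the structural dichotomy advertised in the introduction: $V(C)$ splits into a thin ``high-degree'' core $H_i^C$ of size at most $n/2 - 1$ (mirroring the extremal Erd\H{o}s--Gallai graph, a clique of order $n/2 - 1$ joined to an independent set), a small ``intermediate'' layer, and a ``low-degree'' bulk whose vertices have degree at most $n/2 - 1$ in $G_i$. Consequently each large component (say $|V(C)| \geq n$) of $G_i$ is forced into a rigid Erd\H{o}s--Gallai-type configuration and carries at most $(n/2 - 1)|V(C)| - \Omega(n^2)$ edges, while each small component is handled trivially via $e(G_i[C]) \leq \binom{|V(C)|}{2} \leq (n/2-1)|V(C)|$.

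Summing these per-component bounds over the components of color $i$ and then over the $k$ colors yields
\[
    e(G) = \sum_{i=1}^{k} e(G_i) \;\leq\; k(n/2-1)\, N - \sum_{i=1}^{k} \alpha_i,
\]
where $\alpha_i \geq 0$ collects the ``deficit'' coming from the large components of color $i$ and the constrained bulk degrees there. Using only $\sum_i \alpha_i \geq 0$ recovers the classical bound $N \leq kn + O(1)$; the improvement to $N \leq (k - 1/2 + \varepsilon)n$ requires showing that the density hypothesis forces $\sum_i \alpha_i$ to be at least roughly $\tfrac{1}{2} n N$.

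This last step is the crux. The density assumption implies that almost every vertex has total degree close to $N-1$, and by averaging, color-degree close to $(N-1)/k$ in most colors. A vertex with color-$i$-degree exceeding $n/2 - 1$ must lie in the core $H_i^C$, but across all components the cores have total size at most $k(n/2-1)$, so most vertices lie in the bulk of almost every color. This in turn compels the large components to concentrate their high degrees in the cores and forgo many potential edges, feeding back into a quantitative lower bound on $\sum_i \alpha_i$. The main obstacle is making this trade-off sharp: combining the vertex-level degree budget with the component-level Erd\H{o}s--Gallai structure so that the saved factor comes out as $\tfrac{1}{2}$ rather than the $\tfrac{1}{4}$ obtained in Davies--Jenssen--Roberts. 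The precise hypothesis $\delta < \varepsilon^3/(3k^2)$ is expected to arise from this counting as the slack required to absorb error terms when translating density into deficit, and the new edge-counting tool for graphs without large connected matchings announced in the introduction is exactly what should supply this sharpened trade-off.
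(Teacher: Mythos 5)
Your outline captures the broad strategy---arguing by contradiction, decomposing each color class via an Erd\H{o}s--Gallai-type structure, and tracking an edge ``deficit''---but it leaves a genuine gap exactly where the paper's new ideas live, and you acknowledge this yourself in the final paragraph. Three things are missing. First, your per-component structure is slightly off: in the paper's Lemma~\ref{lem:structure}, the ``bulk'' is an \emph{independent} set $I$ of low-degree vertices (degree $<n/2$), while the intermediate layer $Q$ can have degree up to $n-1$; you assign the degree-$\le n/2-1$ bound to the bulk and say nothing about $Q$ having bounded degree, which is what actually drives the loss accounting. Second, and more importantly, you try to extract the $\tfrac12 n N$ deficit component-by-component from ``large components,'' but the paper does something different: it classifies each \emph{vertex} by its simultaneous role across all $k$ colors (strong if in some $S_i$, $Q$-saturated if in $Q_i$ for every $i$, small otherwise), shows that the density hypothesis forces almost all vertices to be $Q$-saturated, and only then reads off the deficit. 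This cross-color classification is what lets the argument beat the $\tfrac14$ bound, and nothing in your write-up reconstructs it.

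Third, your endgame is wrong in shape. You suggest the deficit comes directly from the structure of large components. The paper instead \emph{removes} the few strong and low-degree vertices, leaving a graph $G'$ on $(k-1/2)n$ vertices in which every remaining vertex is $Q$-saturated, hence every monochromatic component has size at most $n-1$; it then applies a separate counting lemma (Lemma~\ref{lem:eGsmallcomp}) showing that a $k$-coloring on $(k-1/2)n$ vertices with all components of size $\le n$ must miss at least $n^2/32$ edges. That lemma is an independent, self-contained argument (choose the densest color, count its components, observe that the second color is $(k+1)$-partite relative to the first, etc.), and it is what ultimately produces the contradiction. Your proposal neither states nor proves anything of this kind; the sentence ``the new edge-counting tool \ldots is exactly what should supply this sharpened trade-off'' is a placeholder, not a proof. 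Without the cross-color vertex classification and the small-components lemma, the sketch does not close, and the specific constant $\delta < \varepsilon^3/(3k^2)$ has no derivation.
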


We will then use the mentioned relationship between avoiding even cycles and connected matching to derive the following theorem.

\begin{theorem}
	For every integer $k\geq 4$ and an even integer $n$
	\[R_k(C_n)\leq \left(k-\frac{1}{2}+o(1)\right)n.\]
	\label{thm:RCn}
\end{theorem}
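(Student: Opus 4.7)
The plan is to combine Theorem~\ref{thm:main} with the Figaj--Łuczak correspondence (Lemma~\ref{lem:CycMat}), which says that asymptotically, forcing a monochromatic $C_n$ in a $k$-colored complete graph is equivalent to forcing a monochromatic connected matching of size $n/2$ in every $k$-colored sufficiently dense graph on roughly the same number of vertices. Once this reduction is in place, Theorem~\ref{thm:RCn} follows almost immediately.

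Concretely, I would fix $\eta>0$ and set $N=\lceil(k-1/2+\eta)n\rceil$; the goal is to exhibit a monochromatic $C_n$ in every $k$-coloring of $E(K_N)$ once $n$ is large enough. Apply the multicolor Szemer\'edi regularity lemma to this coloring with regularity parameter $\varepsilon\ll\eta$, and form the reduced graph $R$ on $M$ clusters by keeping only $\varepsilon$-regular pairs (in every color simultaneously) and coloring each surviving edge by the densest color on the corresponding pair. Standard estimates then give that $R$ has edge density at least $1-\delta$ for any prescribed $\delta>0$, and that $v(R)=M>(k-1/2+\eta/2)\cdot n'$, where $n'$ is an even integer close to $n/m$ and $m$ is the common cluster size.

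Next, I would feed $R$ into Theorem~\ref{thm:main} with parameters $\varepsilon'=\eta/2$ and $\delta$ chosen so that $\delta<(\varepsilon')^3/(3k^2)$, which is possible by taking $\varepsilon$ small enough relative to $\eta$. The theorem yields a monochromatic connected matching $\mathcal{M}$ of size $n'/2$ in $R$. Lemma~\ref{lem:CycMat} then lifts $\mathcal{M}$ to a monochromatic even cycle of length $n$ in the original $K_N$: the matching edges of $R$ correspond to monochromatic $\varepsilon$-regular pairs in $K_N$, and the connectivity of $\mathcal{M}$ together with the regularity of these pairs permits embedding a monochromatic cycle of any even length up to essentially $2m\cdot|\mathcal{M}|\approx n$ via the usual greedy embedding along a long path in the component of $\mathcal{M}$.

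The main difficulty is not conceptual but a careful parameter hierarchy. One must choose $1/n\ll 1/m\ll\delta\ll\varepsilon\ll\eta$ so that (i) the regularity lemma is applicable and its error terms are absorbed, (ii) the reduced graph meets the hypotheses $v(R)>(k-1/2+\varepsilon')n'$ and $e(R)\ge(1-\delta)\binom{v(R)}{2}$ required by Theorem~\ref{thm:main}, and (iii) the lift of $\mathcal{M}$ produces a cycle of length exactly $n$, absorbing the losses coming from the exceptional set $V_0$ and from rounding $n/m$ to an even integer. Since this parameter chase and the lifting step are standard in the Łuczak framework and have already been encapsulated in Lemma~\ref{lem:CycMat}, sending $\eta\to 0$ sufficiently slowly converts the bound $(k-1/2+\eta)n$ into $(k-1/2+o(1))n$ and proves the theorem.
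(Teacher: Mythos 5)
Your proposal is correct and follows essentially the same route as the paper: apply Lemma~\ref{lem:CycMat} with $c=k-1/2$ and invoke Theorem~\ref{thm:main} (together with the observation $(1+\varepsilon)(k-1/2)n \geq (k-1/2+\varepsilon)n$) to supply the connected-matching hypothesis Lemma~\ref{lem:CycMat} requires. The regularity-lemma machinery and cycle-lifting step you sketch are precisely what is already packaged inside Lemma~\ref{lem:CycMat}, so re-deriving them is unnecessary but not incorrect.
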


As as immediate consequence of the above result we obtain an upper bound on the $k$-color Ramsey numbers for paths which follows from the fact that $P_n\subseteq C_n$.

\begin{cor}
	For every integer $k\geq 4$ and an even integer $n$
	\[R_k(P_n)\leq \left(k-\frac{1}{2}+o(1)\right)n.\]
	\label{cor:RPn}
\end{cor}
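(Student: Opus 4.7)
The plan is to derive the corollary directly from Theorem~\ref{thm:RCn} by invoking the elementary monotonicity principle for Ramsey numbers: if $H$ is a subgraph of $H'$, then $R_k(H) \leq R_k(H')$, because any monochromatic copy of $H'$ automatically contains a monochromatic copy of $H$ in the same color. It therefore suffices to exhibit $P_n$ as a subgraph of $C_n$, which is precisely the remark already made in the excerpt.

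The containment $P_n \subseteq C_n$ is immediate: deleting any single edge from the cycle $C_n$ leaves a Hamiltonian path on its $n$ vertices, which is a copy of $P_n$. Combining this with the monotonicity principle, I would argue that for any $N \geq R_k(C_n)$ and any $k$-edge-coloring of $K_N$, Theorem~\ref{thm:RCn} produces a monochromatic $C_n$, which in turn contains a monochromatic $P_n$ in the same color. Hence $R_k(P_n) \leq R_k(C_n)$, and substituting the bound from Theorem~\ref{thm:RCn} yields $R_k(P_n) \leq (k - 1/2 + o(1))n$, as claimed.

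There is effectively no obstacle in this step, since all the analytic content is already packed into Theorem~\ref{thm:RCn}. The one thing to double-check is that the parity hypothesis transfers correctly: Theorem~\ref{thm:RCn} is stated for even $n$, and the corollary is stated under the same hypothesis, so no additional parity argument is needed. The proof should therefore be no longer than a single sentence in the final write-up.
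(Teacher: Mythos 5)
Your argument is correct and matches the paper's approach exactly: the paper likewise derives the corollary from Theorem~\ref{thm:RCn} via the observation that $P_n \subseteq C_n$, so $R_k(P_n) \leq R_k(C_n)$. Your extra remark about checking the parity hypothesis is a sensible sanity check but introduces nothing new.
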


\paragraph{Outline.} Our paper is organized as follows. In Section~\ref{sec:methods} we show the structural results for graphs avoiding connected matchings and two tools to apply these structures for showing Ramsey results.
In Section~\ref{sec:mainproof} we prove the main result in two steps. First, we use our structural results from
the preceding section to show that we can only have few high-degree vertices. Second, we remove
these vertices and show that the resulting connected components are then all small (of size at most n).
In Sections~\ref{sec:structure} and~\ref{sec:LemlossCorrloss} we prove the lemmas we used in Section~\ref{sec:methods}. 
Finally, in Section~\ref{sec:conclusion} we summarize our results and give some ideas for future work.

\section{Methods}\label{sec:methods}

All graphs considered in this paper are simple, without loops or multiple edges. For a graph $G=(V,E)$ we denote by $V(G)$ and $E(G)$ the vertex set and the edge set of the graph $G$ and we set $v(G)$ and $e(G)$ to be the respective cardinalities. 

As mentioned there is a relation between avoiding connected matchings and even cycles in form of the following lemma which is a variant of a lemma used in~\cite{figaj2007ramsey}.  
\begin{lemma}[Lemma 8 in~\cite{luczak2012multi}]
	Let a real number $c>0$ be given. If for every $\varepsilon >0$ there exists a $\delta >0$ and an $n_0$ such that for every even $n>n_0$ and any graph $G$ with $v(G)>(1+\varepsilon)cn$ and $e(G)\geq (1-\delta)\binom{v(G)}{2}$ and any $k$-edge-coloring of $G$ has a monochromatic component containing a matching of $n/2$ edges then
	\[R_k(C_n)\leq \left(c+o(1)\right)n.\]
	\label{lem:CycMat}
\end{lemma}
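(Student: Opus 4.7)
The plan is the standard Szemerédi-style transference used throughout this area: apply the multicolor regularity lemma to an arbitrary $k$-edge-coloring of $K_N$ with $N=(c+\varepsilon)n$, pass to a much smaller ``reduced'' cluster graph where the hypothesis can be invoked, and then lift the resulting monochromatic connected matching back to a copy of $C_n$ in $K_N$ by embedding paths through regular pairs.

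Concretely, given $\varepsilon>0$, I would fix a small auxiliary $\varepsilon'\ll\varepsilon$, invoke the hypothesis to obtain the corresponding $\delta$ and $n_0$, and then pick a regularity parameter $\eta$ much smaller than both $\delta$ and $\varepsilon'$. Applying the multicolor Szemerédi regularity lemma simultaneously to all $k$ color classes of $K_N$ produces an equitable partition of $V(K_N)$ into an exceptional set $V_0$ with $|V_0|\le \eta N$ together with clusters $V_1,\dots,V_m$ of common size $t\approx N/m$, where at most $\eta\binom{m}{2}$ pairs $(V_i,V_j)$ fail to be $\eta$-regular in every color. I would form the reduced graph $R$ on the vertex set $\{V_1,\dots,V_m\}$ by joining $V_i$ to $V_j$ whenever the pair is $\eta$-regular in all colors and colored edges are not too sparse, assigning this edge the color that maximizes its density on $(V_i,V_j)$; after trimming the few bad pairs one gets $e(R)\ge(1-\delta)\binom{m}{2}$. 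Choosing $\hat n$ to be the largest even integer with $(1+\varepsilon')c\hat n<m$ (so $\hat n\approx m/(c(1+\varepsilon'))$ and $\hat n>n_0$ for $n$ large), the hypothesis, applied to $R$ with $\hat n$ in place of $n$, yields a monochromatic connected matching $M$ of size $\hat n/2$ in $R$.

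To finish, I would lift $M$ to a long monochromatic cycle in $K_N$ in the now-standard way. Each matching edge $(V_i,V_j)$ is a $\eta$-regular pair of density close to $1$ in the chosen color, and by the usual embedding lemma for regular pairs it contains, after deleting a negligible set of atypical vertices, a path on $(1-o(1))\cdot 2t$ vertices. Between consecutive matching edges, the monochromatic connectivity supplied by $M$ in $R$ furnishes a short monochromatic path in $R$, which is realized in $K_N$ through the corresponding regular pairs using only a constant number of vertices per cluster traversed. Splicing these long ``matching'' paths with the short ``connector'' paths produces a monochromatic cycle whose length is at least $\hat n\cdot t\cdot(1-o(1))\ge \tfrac{m}{c(1+\varepsilon')}\cdot\tfrac{(1-\eta)N}{m}\cdot(1-o(1))\ge n$, provided $\varepsilon'$ is chosen small enough relative to $\varepsilon$.

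The main obstacle is not the lifting itself but producing a cycle of length \emph{exactly} $n$ (with $n$ even) rather than merely of length at least $n$. For this I would use the standard fact that a sufficiently dense regular pair contains paths of every prescribed length (of the correct parity) up to nearly its full size, which lets one trim the contribution of one matching edge to hit $n$ on the nose, exploiting that $n$ is even. The most delicate part of the write-up is calibrating the hierarchy $\eta\ll\delta\ll\varepsilon'\ll\varepsilon$ so that all losses---the exceptional set $V_0$, the irregular or sparse pairs discarded from $R$, the vertices consumed by connector paths, and the trimming inside each regular pair---fit together inside the final $o(1)$ slack.
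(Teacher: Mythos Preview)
The paper does not prove this lemma at all; it is quoted verbatim as Lemma~8 of \cite{luczak2012multi} and used as a black box, so there is no ``paper's own proof'' to compare against. Your outline is precisely the standard \L uczak--Figaj--\L uczak argument that underlies the cited result, and it is essentially correct.

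One small inaccuracy worth flagging: when you say each matching edge $(V_i,V_j)$ is ``a $\eta$-regular pair of density close to $1$ in the chosen color,'' that is not right. The pair has total density close to~$1$ (we started from $K_N$), but after assigning it a majority color its density \emph{in that color} is only guaranteed to be at least roughly $1/k$ (or whatever positive threshold you fixed when defining $R$). This does not damage the argument---an $\eta$-regular pair of any fixed positive density $d\gg\eta$ still contains paths of every length of the right parity up to $(1-o(1))\cdot 2t$---but the write-up should reflect the correct density and the hierarchy should read $\eta\ll d\le 1/k$ rather than $d\approx 1$.
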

For $c\geq 1$ we surely have $(1+\varepsilon)cn \geq (c+\varepsilon)n$ which is more convenient for later calculations. Observe that Theorem~\ref{thm:RCn} follows from Theorem~\ref{thm:main} by using Lemma~\ref{lem:CycMat} with $c=k-1/2$.\\

 \subsection{Structure}

The structure of graphs without large connected matchings play an important role in this paper. As a path on $n$ vertices contains a connected matching on $\lfloor n/2 \rfloor$ edges, extremal results for paths directly give an upper bound for connected matchings.

\begin{lemma}[Erd\H{o}s-Gallai~\cite{erdHos1959maximal}]
	Let $H$ be a graph which does not contain an $n$-vertex path. Then \[e(H)\leq \frac{n-2}{2}v(H).\]
\label{lem:ErdösGallai}
\end{lemma}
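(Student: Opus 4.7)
My plan is to prove the bound by induction on $v(H)$. The base case $v(H)\leq n-1$ is immediate, since $e(H)\leq \binom{v(H)}{2}=\tfrac{v(H)-1}{2}v(H)\leq \tfrac{n-2}{2}v(H)$. In the inductive step I assume $v(H)\geq n$, and since both sides of the inequality are additive across connected components, I may further assume that $H$ is connected.

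If the minimum degree $\delta(H)$ is at most $(n-2)/2$, I delete a vertex $v$ realizing it and apply the induction hypothesis to $H-v$; the number of removed edges is at most $(n-2)/2$, which closes the bound. The substance therefore lies in the opposite regime $\delta(H)>(n-2)/2$, where I aim to contradict the hypothesis by exhibiting a path on $n$ vertices.

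Fix a longest path $P=v_1v_2\cdots v_\ell$ in $H$. Maximality forces $N(v_1),N(v_\ell)\subseteq V(P)$, so I consider the index sets
\[A=\{\,i: v_\ell v_i\in E(H)\,\},\qquad B=\{\,i: v_1 v_{i+1}\in E(H)\,\},\]
both contained in $\{1,\dots,\ell-1\}$. Since $|A|+|B|=\deg(v_\ell)+\deg(v_1)\geq n-1$, the assumption $\ell\leq n-1$ forces $A\cap B\neq\emptyset$ by pigeonhole. For any $i$ in the intersection, the edges $v_1v_{i+1}$ and $v_\ell v_i$ together with $P$ form a cycle $C$ on the vertex set $V(P)$; as $v(H)\geq n>\ell$, connectivity of $H$ supplies a vertex $u\notin V(P)$ with a neighbor on $C$, and opening $C$ at that neighbor yields a path with $\ell+1$ vertices, contradicting the choice of $P$. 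Hence $\ell\geq n$, giving an $n$-vertex path and completing the induction.

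The main obstacle is the P\'osa-type rotation step in the high-minimum-degree case. A minor technicality is checking that $\delta(H)>(n-2)/2$ really gives $\deg(v_1)+\deg(v_\ell)\geq n-1$ for both even and odd $n$; this is handled uniformly via $\delta(H)\geq \lceil (n-1)/2\rceil$.
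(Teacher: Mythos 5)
Your proof is correct. Note that the paper does not supply its own proof of this lemma --- it is quoted as a classical result of Erd\H{o}s and Gallai --- so there is no in-paper argument to compare against. Your route (induction on $v(H)$, reduce to connected $H$, strip a low-degree vertex when $\delta(H)\le (n-2)/2$, and otherwise use the endpoint neighbourhood/rotation trick to close a Hamiltonian cycle on $V(P)$ and extend it via connectivity) is a standard and complete proof of the Erd\H{o}s--Gallai path bound. The one place worth spelling out is the pigeonhole: with $A,B\subseteq\{1,\dots,\ell-1\}$, $|A|=\deg(v_\ell)$, $|B|=\deg(v_1)$, and $\deg(v_1)+\deg(v_\ell)\ge n-1$, the assumption $\ell\le n-1$ gives $|A|+|B|\ge n-1>\ell-1$, forcing $A\cap B\neq\emptyset$; you have this right, and the parity check $\delta(H)\ge\lceil(n-1)/2\rceil$ handles both parities of $n$ cleanly.
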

\begin{cor}
	Let $H$ be a graph which does not contain a connected matching of size $n/2$ for even $n$. Then 
	\[e(H)\leq \frac{n-2}{2}v(H).\]
\label{cor:ErdösGallai}
\end{cor}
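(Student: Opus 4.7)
The plan is to derive the corollary directly from the Erd\H{o}s-Gallai lemma via the contrapositive, using the observation that a path on $n$ vertices (for even $n$) already contains a connected matching of size $n/2$ as a subgraph.

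First, I would argue that any $n$-vertex path $P_n = v_1 v_2 \cdots v_n$ contains the matching $\{v_1v_2, v_3v_4, \ldots, v_{n-1}v_n\}$, which has exactly $n/2$ edges since $n$ is even. Because every edge of this matching lies in the same connected subgraph (the path itself), the matching is automatically connected in the sense defined in the introduction: all its edges belong to a single connected component of the host graph.

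Taking the contrapositive, I would then conclude: if $H$ does not contain a connected matching of size $n/2$, then $H$ cannot contain a copy of $P_n$ either, since such a copy would supply a connected matching of the forbidden size. With $H$ now known to be $P_n$-free, I would invoke Lemma~\ref{lem:ErdösGallai} to obtain $e(H) \leq \frac{n-2}{2} v(H)$, which is exactly the claimed bound.

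There is no real obstacle here; the only subtlety worth stating explicitly is that the parity assumption on $n$ is what makes the path-to-matching extraction produce exactly $n/2$ edges (so that the implication goes through without an off-by-one loss). No component-by-component argument is needed because the containment $P_n \subseteq H$ is equivalent to $P_n$ being contained in some component of $H$, so Erd\H{o}s-Gallai applies globally.
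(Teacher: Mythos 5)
Your argument is correct and is exactly the route the paper takes: it observes (just before stating Lemma~\ref{lem:ErdösGallai}) that a path on $n$ vertices contains a connected matching of $\lfloor n/2\rfloor$ edges, so forbidding connected matchings of size $n/2$ forbids $P_n$, and Erd\H{o}s--Gallai then gives the bound. Your write-up just makes the contrapositive explicit, which the paper leaves implicit.
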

The extremal graph in both cases consists of disjoint cliques of size $n-1$.
In~\cite{balister2008connected} the extremal configurations of connected graphs without a long path are discussed. We provide a structure for connected graphs without large connected matchings which is very similar. We capture this in the following lemma.
\begin{lemma}
	For every connected graph $G=(V,E)$ without a matching of size $n/2$ there is a partition $S\cup Q\cup I$ of the vertex set such that \begin{enumerate}
		\item $|Q|+ 2|S| =  min\{v(G),n-1\} $, \label{lemstruc:Cond:1}
		\item $I$ is an independent set; additionally, if $v(G)\leq n-1$, then $I=\emptyset$,\label{lemstruc:Cond:2}
		\item every vertex in $Q$ has at most one neighbor in $I$,\label{lemstruc:Cond:3}
		\item every vertex in $I$ has degree less than $n/2$. \label{lemstruc:Cond:4}
	\end{enumerate}
	\label{lem:structure}
\end{lemma}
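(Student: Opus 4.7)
The proof naturally splits into two cases on $v(G)$. If $v(G)\leq n-1$, I would set $Q=V$ and $S=I=\emptyset$: then $|Q|+2|S|=v(G)=\min\{v(G),n-1\}$, and conditions (2)--(4) are vacuous. So the substantive case is $v(G)\geq n$, where the target in (1) becomes $n-1$.

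For this case I plan to work with the Gallai--Edmonds decomposition $V=D\cup A\cup C$ of $G$. Let $D_1,\dots,D_c$ be the components of $G[D]$, each factor-critical and of odd size, and let $m$ denote the size of a maximum matching. The standard Gallai--Edmonds identities $|A|\leq m\leq n/2-1$ and $2m=v(G)+|A|-c$ give $c\geq v(G)+|A|-n+2$. I would take $S:=A$ and then choose $I$ as one vertex from each of exactly $|I|:=v(G)+|A|-n+1$ selected components of $G[D]$, with $Q:=V\setminus(S\cup I)$. Once this is arranged, condition~(1) is immediate from $|Q|+2|S|=v(G)+|A|-|I|=n-1$; condition~(2) holds because distinct components of $G[D]$ have no edges between them; and condition~(3) holds because every $q\in Q$ either lies in $C$ (no neighbor in $D$ at all) or lies in some $D_j$ (with all of its neighbors in $D_j\cup A$, hence at most one neighbor in $I\cap D_j$).

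The only real difficulty is condition~(4), forcing every chosen representative to have degree less than $n/2$. Call a component \emph{bad} if every one of its vertices has degree at least $n/2$ in $G$, and let $b$ denote the number of bad components; the goal is to show $b\leq n-2m-1$, since then the number of good components is $c-b\geq v(G)+|A|-n+1=|I|$, leaving enough components to supply low-degree representatives. A vertex $v\in D_j$ satisfies $\deg_G(v)\leq|D_j|-1+|A|$ (its neighbors lie in $D_j\cup A$), so any bad $D_j$ has $|D_j|\geq n/2+1-|A|$; since $|D_j|$ is odd, this gives $|D_j|-1\geq n/2-|A|$. Summing the near-perfect internal matching contributions $(|D_j|-1)/2$ over the bad components and comparing with $m-|A|\geq(|D|-c)/2$ yields $b(n/2-|A|)\leq 2(m-|A|)$. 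The main technical step is then the algebraic check that $b\geq n-2m$ is impossible: setting $d:=n-2m-2\geq 0$ and using $|A|\leq m$, a short expansion gives $(n-2m)(n/2-|A|)-(2m-2|A|)=d(m-|A|)+(d+2)^{2}/2>0$, which delivers the required bound $b\leq n-2m-1$ and completes the construction.
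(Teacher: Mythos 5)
Your proof is correct but takes a genuinely different route from the paper's. The paper invokes the Tutte--Berge/Berge theorem to get a set $S$ with $q(G\setminus S)-|S|$ matching the deficiency, and then builds $I$ by taking one vertex from every odd component of $G\setminus S$ \emph{except the largest one}; this seemingly minor choice does all the work for condition~(4), since a representative $v$ from an odd component $Q_i$ has at most $|Q_i|-1\le |Q|/2$ neighbors in $Q$ (because $Q_1\subseteq Q$ and $|Q_1|\ge|Q_i|$), giving $\deg(v)\le |S|+|Q|/2=(n-1)/2<n/2$ with no further counting. You instead use the full Gallai--Edmonds decomposition $V=D\cup A\cup C$, take $S=A$, and establish condition~(4) by a separate counting argument on ``bad'' components: you show $b(n/2-|A|)\le 2(m-|A|)$ from the near-perfect internal matchings and then verify algebraically that $b\ge n-2m$ is impossible, so at least $|I|$ components contain a vertex of degree below $n/2$. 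I checked your algebra --- the identity $(n-2m)(n/2-|A|)-(2m-2|A|)=d(m-|A|)+(d+2)^2/2$ with $d=n-2m-2\ge 0$ is correct, and together with $|A|\le m\le n/2-1$ it gives the strict positivity you need. The trade-off is that your approach requires more structural machinery (Gallai--Edmonds rather than just Tutte--Berge) and an extra counting step, whereas the paper's ``spare the largest component'' trick sidesteps that counting entirely; on the other hand, your argument makes the degree bound explicit via the decomposition rather than via a maximality-of-$Q_1$ observation, which some may find more transparent.
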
 
 We only want to give an intuition on the structure here and defer its proof to Section~\ref{sec:structure}. In Figure~\ref{fig:structure} an example of such a partition can be seen.
 Our structure has a series of properties that we use later. From $|Q|+ 2|S| \leq n - 1$  we can easily deduce that $|S| < n/2$ and $|Q| < n$. From $|Q|+ 2|S| \leq v(G)$ we get $ |S|\le |I|$, with a strict inequality as soon as the number of vertices exceeds $n-1$. Note that the graph induced by $Q$ and $S$ can potentially be a clique, and we think of the vertices in $S$ as high degree vertices because they can potentially have edges to every other vertex. Vertices in $Q$ have at most $|S|+|Q| < n$ neighbors and if $v(G) < n$, then $I$ is empty so $|S| = 0$ and $|Q| = v(G)$ (using \eqref{lemstruc:Cond:1}).
\begin{figure}
\centering
\includegraphics[scale=0.8]{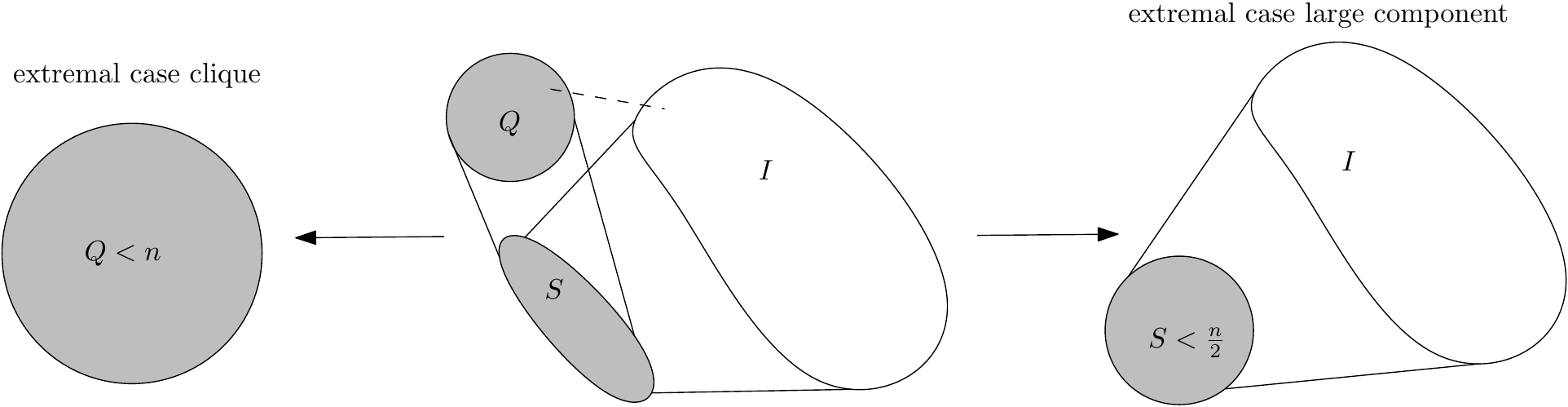}
\caption{Visualization of the structure from Lemma~\ref{lem:structure} including extremal structures}
\label{fig:structure}
\end{figure}
  
From the above properties it also follows that the number of edges is at most $\binom{|Q|+|S|}{2} + |I| \cdot |S| + |Q|$. After some careful consideration one can see that this bound is stronger than the Erd\H{o}s-Gallai bound as soon as $v(G) > n+1$.

 \subsection{Loss Function}
 
 In the following we define a function which intuitively should capture the difference between the bound of Erd\H{o}s-Gallai and the bound implied by Lemma~\ref{lem:structure}. Remember Corollary~\ref{cor:ErdösGallai} gave us $e(G)\leq \frac{n-2}{2}v(G)$ for general graphs. To simplify computations, we use the bound
  \[e(G)\leq \frac{n-1}{2}v(G).
 \]
 We denote by $\mathcal{G}$ the class of all graphs without a connected matching of size $n/2$. Then the function maps $\mathcal{G}$ to a positive rational number. 
 
 It is important to note that whenever we have connected components we can partition them as in Lemma~\ref{lem:structure}. This partition may not be unique but we can fix an arbitrary one to make the definitions consistent.

 \begin{defi}
 	Let $\mathcal{G}$ be the class of all graphs without a connected matching of size $n/2$. We define $f\colon\mathcal{G}\to \mathbb{Q}_{\geq 0}$ as the  difference between $\frac{n-1}{2}v(G)$ and the number of edges in $G$, i.e.
$$ f(G)=\frac{n-1}{2}v(G) -e(G).$$  	\label{def:fG}
 \end{defi}
 Observe that Corollary~\ref{cor:ErdösGallai} guarantees that $f(G)\geq 0$ for any graph without a connected matching of size $n/2$. One way to think about $f(G)$ is as the number of edges $G$ loses in comparison to $\frac{n-1}{2}v(G)$. In the following we also refer to this value as the loss of $G$. Intuitively, this loss can happen for two reasons:
 \begin{enumerate}
 	\item $G$ can lose edges because of large connected components. This part can be captured by the difference between $\frac{n-1}{2}v(G)$ and the bound of edges for connected graphs obtained from Lemma~\ref{lem:structure}. 
 	\item Edges missing because $G$ is not saturated. These are the edges that are not present in $G$ but could be added without creating a connected matching of size $n/2$. 
 \end{enumerate}
 
 Throughout this paper we only look at the number of lost edges, it will not be important which exact edges are lost or what the cause was. 
 We introduce a function distributing this loss among the vertices of $G$. 
 
 \begin{defi}
 	Let G be a graph without a connected matching of size $n/2$. We denote by $f(v)\colon V(G)\to \mathbb{Q}_{\geq0}$ the loss of edges in $G$ compared to $\frac{n-1}{2}$ caused by a single vertex $v\in V(G)$ defined as follows. Let $C$ be the connected component of $G$ including $v$ and let $S\cup Q \cup I$  be a partition of $V(C)$ as described in Lemma~\ref{lem:structure}. We set 
 	\begin{align*}
	 	f(v)=\begin{cases}
		 	\frac{n-1}{4},&\text{if } v\in S,\\
		 	\frac{n-1}{2}-\frac{\deg(v)}{2},&\text{if }v\in Q,\\
		 	0,& \text{if }v\in I.
	 	\end{cases}
 	\end{align*}\\
 	\label{def:fvW}
 \end{defi}

From Lemma~\ref{lem:structure} we have that $\deg(v) \le n-1$ for vertices in $Q$ and so the function $f(v)$ is non-negative and well defined. Since we want to distribute the loss of the graph over the different vertices we need that the sum of the losses over all vertices is at most the loss of $G$. We claim that this holds for $f$ defined as above. 
 \begin{lemma}
 	Let $G$ be a graph without a connected matching of size $n/2$. 
 	Then $$\sum_{v\in V(G)}f(v)\leq f(G).$$\label{lem:fconnloss}
 \end{lemma}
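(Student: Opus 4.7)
The plan is to reduce to a single connected component, since both $f(G)$ and $\sum_v f(v)$ are additive over components (the partition in Lemma~\ref{lem:structure} is applied componentwise by definition). So fix a connected component, let $S \cup Q \cup I$ be its partition from Lemma~\ref{lem:structure}, and write $s,q,i$ for the sizes.

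Next, I would expand the difference $f(G) - \sum_v f(v)$ directly. Since $\sum_v f(v) = \frac{(n-1)s}{4} + \frac{(n-1)q}{2} - \frac{1}{2}\sum_{v \in Q}\deg(v)$, the terms involving $|Q|$ combine nicely with $e(G) = e(S) + e(S,Q) + e(S,I) + e(Q) + e(Q,I)$ (using $e(I)=0$ from condition \eqref{lemstruc:Cond:2}) and $\sum_{v \in Q}\deg(v) = 2e(Q) + e(S,Q) + e(Q,I)$, yielding
\[
f(G) - \sum_v f(v) = \frac{(n-1)s}{4} + \frac{(n-1)i}{2} - e(S) - \tfrac{1}{2} e(S,Q) - e(S,I) - \tfrac{1}{2} e(Q,I).
\]
The edges inside $Q$ cancel entirely; this is the key algebraic observation, since it means that only edges touching $S$ or connecting $Q$ and $I$ contribute to what needs to be controlled.

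Now I would bound each remaining edge term using the structural properties from Lemma~\ref{lem:structure}: $e(S) \leq \binom{s}{2}$ and $e(S,Q) \leq sq$ and $e(S,I) \leq si$ trivially, while condition \eqref{lemstruc:Cond:3} gives the crucial bound $e(Q,I) \leq q$. Substituting these produces a quadratic expression in $s, q, i$ whose non-negativity must be checked using the size constraints $q + 2s \leq n-1$ and $s \leq i$ from condition \eqref{lemstruc:Cond:1}. Specifically, I would split into the two cases of Lemma~\ref{lem:structure}: when $v(G) \leq n-1$ we have $I = \emptyset$ and condition \eqref{lemstruc:Cond:1} forces $s=0$, so the inequality becomes trivial; when $v(G) \geq n$ we have $q + 2s = n-1$ and the strict inequality $i \geq s+1$, and after substituting $q = n-1-2s$ the expression collapses to a manifestly non-negative quantity (expected to factor as roughly $\frac{q}{4}(2i - 2 - s)$ plus a slack term, using $i \geq s+1$).

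The main obstacle will be step 3, cleanly collapsing $f(G) - \sum_v f(v)$ into a form where only ``$S$-touching'' and ``$Q$-to-$I$'' edges remain. Once that identity is in hand, the rest is a bookkeeping exercise with the structural inequalities. There is no new combinatorial idea needed beyond the bounds already packaged into Lemma~\ref{lem:structure}; the delicate point is making sure that the $\frac{n-1}{4}$ weighting on $S$-vertices (rather than $\frac{n-1}{2}$) is exactly what allows the quadratic in $s$ to be absorbed by the $|I| \cdot (\tfrac{n-1}{2} - s)$ surplus, which is where the hypothesis $s \leq i$ gets used.
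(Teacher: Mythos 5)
Your proposal is correct and follows essentially the same route as the paper's proof: both reduce to connected components, apply Lemma~\ref{lem:structure}, split into the two size cases, and in the hard case $v(G)>n-1$ use the constraints $|Q|+2|S|=n-1$, $|I|\geq|S|+1$, and the at-most-one-$I$-neighbor property of $Q$ to close the final quadratic inequality. Your bookkeeping via the edge-type decomposition $e(G)=e(S)+e(S,Q)+e(S,I)+e(Q)+e(Q,I)$ (with $e(Q)$ cancelling) and the paper's bookkeeping via degree sums over $S$ and $I$ are arithmetically equivalent through the handshake lemma and produce the identical estimates, so there is no substantive difference in approach.
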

 We defer the proof of this to Section~\ref{sec:LemlossCorrloss}.\\
 
 Let $G$ be a graph with a $k$ coloring of its edges. We introduce three different categories for the vertices of $G$. Let $v\in V(G)$ be some vertex of our graph and let $G_1,\ldots,G_k$ be the monochromatic subgraphs in each color. Then for every color $i$ the vertex $v$ is in exactly one component of $G_i$. We denote this component by $C_i^v$. Then we have $C_1^v,\ldots,C_k^v$ which are $k$ monochromatic components in different colors, each including the vertex $v$. We partition the vertices into three classes depending on their role in the $k$ components.
\begin{defi}
	For a graph $G$ with a $k$-edge-coloring not containing a monochromatic connected matching of size $n/2$ and a vertex $v\in V(G)$, let $C_1^v,\ldots,C_k^v$ be the $k$ components containing $v$ in $G_1,\ldots ,G_k$. Consider the partition $C_i^v=S_i^v\cup Q_i^v \cup I_i^v$ given by Lemma~\ref{lem:structure} for every color $i$.
	We call the vertex $v$
\begin{enumerate}
	
	\item \textit{\textbf{strong}}\text{ if it is in $S_i^v$ for some color $i$,}\label{def:strong}
	\item \textit{\textbf{Q-saturated}}\text{ if it is in $Q_i^v$ for every color $i$,}\label{def:qsat}
	\item \textit{\textbf{small}} \text{ if it is in $I_i^v$ for some color $i$ and in $Q_j^v$ or $I_j^v$ for all other colors $j$.\label{def:small}}
\end{enumerate}
\label{defi:Vertexclass}
\end{defi}
Generalizing Definition~\ref{def:fG}, we define $F(G)$ to be the total loss of edges over all colors in a graph $G$ without a monochromatic connected matching of size $n/2$. We consider $G$ to be the union of $k$ monochromatic graphs, each of which avoids a connected matching of size $n/2$. Applying Corollary~\ref{cor:ErdösGallai}, again with a slight weakening of the bound, for each of the color classes, gives 
	\[e(G) \leq k\cdot \frac{n-1}{2}v(G).\]
This is the bound we use for comparison throughout the paper.
	\begin{defi}
		Let $\mathcal{G}_c$ be the class of all $k$-edge-colored graphs which do not contain a monochromatic connected matching of size $n/2$ and let $F\colon\mathcal{G}_c\to \mathbb{Q}_{\geq 0}$ be defined as the difference between $k\cdot\frac{n-1}{2}v(G)$ and the number of edges in $G$, i.e.
		$$	F(G)=k\cdot\frac{n-1}{2}v(G)-e(G).$$
	\end{defi} 
Observe that $F(G) =k\cdot\frac{n-1}{2}v(G)-e(G) = \sum_{i=1}^{k} f(G_i).$
The above equality holds because every edge of $G$ is colored in exactly one color and thus counted in exactly one $G_i$. This also implies that $F(G)$ is non-negative by the non-negativity of $f(G_i)$.
	 
We again distribute this loss over the vertices in $G$.
\begin{defi}
	Let $G$ be a $k$-edge-colored graph without a monochromatic connected matching of size $n/2$ and let $F(v)\colon V(G)\to \mathbb{Q}_{\geq 0}$ denote the loss of edges in $G$ compared to $k\cdot \frac{n-1}{2}v(G)$ caused by a single vertex $v\in V(G)$. We set
	\begin{align*}
	F(v)=\begin{cases}
	\frac{n-1}{4},& \text{if $v$ is strong},\\
	k\cdot \frac{n-1}{2} -\frac{\deg(v)}{2},&\text{if $v$ is $Q$-saturated}, \\ 
	0, & \text{if $v$ is small}.
	\end{cases}
	\end{align*}
	\label{def:lossfctF}
\end{defi}
This is again well defined as a $Q$-saturated vertex has degree at most $n-1$ in every color by Lemma~\ref{lem:structure}. We conclude $\deg(v)\leq k\cdot(n-1)$ which directly implies that $F(v)$ is non-negative.
We deduce the following corollary from Lemma~\ref{lem:fconnloss} to again verify that we have a valid distribution of the loss. The proof is deferred to Section~\ref{sec:LemlossCorrloss}.
\begin{cor}
	Let $G$ be a $k$-edge-colored graph  without a monochromatic matching of size $n/2$. Then
	\[\sum_{v\in V(G)}F(v)\leq F(G).\]
	\label{cor:lossFv}

\end{cor}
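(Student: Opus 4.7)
The plan is to prove Corollary~\ref{cor:lossFv} by decomposing the multicolor loss vertex-by-vertex across colors and then invoking Lemma~\ref{lem:fconnloss} once per color class. Write $G_1,\ldots,G_k$ for the monochromatic subgraphs of $G$, and for each vertex $v\in V(G)$ and color $i$, let $f_i(v)$ denote the value assigned to $v$ by Definition~\ref{def:fvW} inside $G_i$ (using the partition of the component of $v$ in $G_i$ fixed for the definition of the vertex class of $v$). Since none of the $G_i$ contains a connected matching of size $n/2$, Lemma~\ref{lem:fconnloss} gives $\sum_{v} f_i(v) \le f(G_i)$ for each $i$; summing over $i$ and using the identity $F(G) = \sum_{i=1}^{k} f(G_i)$ already noted in the excerpt yields
\[
\sum_{v\in V(G)} \sum_{i=1}^{k} f_i(v) \;\le\; F(G).
\]

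Given this, it suffices to establish the per-vertex comparison $F(v) \le \sum_{i=1}^{k} f_i(v)$, which I would verify by case analysis on the three categories of Definition~\ref{defi:Vertexclass}. If $v$ is \emph{small} then $F(v)=0$ while each $f_i(v)\ge 0$, so the inequality is immediate. If $v$ is \emph{$Q$-saturated} then $v\in Q_i^v$ in every color, hence $f_i(v)=\tfrac{n-1}{2}-\tfrac{\deg_{G_i}(v)}{2}$; summing over $i$ and using that the color classes partition $E(G)$, so $\deg_G(v)=\sum_i \deg_{G_i}(v)$, gives exactly $k\cdot\tfrac{n-1}{2} - \tfrac{\deg_G(v)}{2} = F(v)$. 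If $v$ is \emph{strong} then some color $i$ has $v\in S_i^v$, contributing $f_i(v)=\tfrac{n-1}{4}$, and the remaining $f_j(v)$ are non-negative, so $\sum_i f_i(v)\ge \tfrac{n-1}{4} = F(v)$.

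Combining the per-vertex inequality with the color-wise summation above closes the argument. I do not expect a serious obstacle: the values in Definition~\ref{def:lossfctF} are calibrated so that in each case one color's single-color loss already matches or dominates $F(v)$, and the multicolor claim is then just the sum of these comparisons. The only point that requires a moment of care is the $Q$-saturated case, where one must explicitly use that edge-colorings split the degree of $v$ additively across colors; everything else is bookkeeping on top of Lemma~\ref{lem:fconnloss}.
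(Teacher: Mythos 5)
Your proposal is correct and follows essentially the same route as the paper: reduce to the per-vertex comparison $F(v)\le\sum_{i=1}^k f_i(v)$ via Lemma~\ref{lem:fconnloss} applied color by color and the identity $F(G)=\sum_i f(G_i)$, then verify the comparison by case analysis on strong, $Q$-saturated, and small vertices. The only difference is cosmetic ordering of the two halves of the argument.
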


\subsection{Small Components}

Finally, we introduce a lemma to find an upper bound on the number of edges of graphs whose color classes have the special property of consisting only of components which are not too large. 
\begin{lemma}
	For any two integers $k\geq 4$ and $n\geq 4$ let $G$ be a $k$-edge-colored graph on $(k-1/2)n$ vertices with color classes $G_1,\ldots,G_k$  such that no $G_i$ has a component of size larger than $n$.
	Then we have
	\[e(G)\leq \binom{v(G)}{2}-\frac{n^2}{32}.\]
	\label{lem:eGsmallcomp}
\end{lemma}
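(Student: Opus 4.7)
The plan is to reduce the lemma to a combinatorial statement about the monochromatic component partitions, and then combine an upper bound on each color's ``internal pair set'' with a lower bound on pairwise overlaps. For each color $i$, let $X_i \subseteq \binom{V(G)}{2}$ be the set of unordered vertex pairs lying in a common component of $G_i$, and let $Y_i$ be its complement. Because every edge of $G$ must be internal to some monochromatic component, $e(G) \leq |\bigcup_{i=1}^k X_i|$, and so writing $N = v(G) = (k-1/2)n$ the claim reduces to producing $|\bigcap_i Y_i| \geq n^2/32$, that is, at least $n^2/32$ pairs lying in different components in \emph{every} color.

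First I would bound each $|X_i|$ by convexity. The color-$i$ components form a partition of $N$ vertices into parts of size at most $n$, so $|X_i| = \sum_A \binom{|A|}{2}$ is maximized when the partition consists of $k-1$ parts of size $n$ and one part of size $n/2$, giving $|X_i| \leq M_n := (k-1)\binom{n}{2} + \binom{n/2}{2}$. Combined with the trivial union bound $|\bigcup_i X_i| \leq \sum_i |X_i|$, this already yields $|\bigcap_i Y_i| \geq \binom{N}{2} - kM_n = (k-1)n(4k-2-n)/8$, which exceeds $n^2/32$ whenever $n$ is small relative to $k$ (concretely, roughly $n \leq 4k-2$), and in particular disposes of the range in which the naive bound suffices.

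For larger $n$ the naive bound is insufficient, and I would refine it by bounding pairwise overlaps from below. By Jensen's inequality applied to the convex function $\binom{\cdot}{2}$, if $\pi_j$ has $\ell_j$ components then for any color-$i$ component $A$ of size $a$ we have $\sum_{B \in \pi_j} \binom{|A \cap B|}{2} \geq \ell_j\binom{a/\ell_j}{2} = a^2/(2\ell_j) - a/2$. Summing over $A \in \pi_i$ and using $\sum_A |A|^2 = 2|X_i|+N$ gives
\[
|X_i \cap X_j| \geq \frac{|X_i|}{\ell_j} - \frac{N(\ell_j - 1)}{2\ell_j}.
\]
Partitioning the $k$ colors into a matching of $\lfloor k/2 \rfloor$ disjoint pairs and iterating the identity $|A \cup B| = |A| + |B| - |A \cap B|$ yields
\[
|\textstyle\bigcup_i X_i| \leq \sum_i |X_i| - \sum_{(i,j) \in \text{matching}} |X_i \cap X_j|,
\]
and combining with Steps above, a direct algebraic check shows the right side is at most $\binom{N}{2} - n^2/32$.

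The main obstacle is the interplay between $|X_j|$ and $\ell_j$: when all partitions are close to extremal ($\ell_j = k$), the overlap lower bound is strong enough, while if some $\ell_j$ is much larger, the corresponding $|X_j|$ drops and the sum $\sum_i |X_i|$ itself already leaves the required slack. I would handle this uniformly by a short case split — one case for ``all partitions near extremal'' where the Jensen bound on overlaps does the work, and one case where some $|X_j|$ is already noticeably below $M_n$ so that the trivial union bound suffices — verifying in both regimes that the loss of at least $n^2/32$ pairs is unavoidable for all $k \geq 4$ and $n \geq 4$.
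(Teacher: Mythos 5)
Your approach is genuinely different from the paper's and the general idea — bound $e(G)$ by $|\bigcup_i X_i|$, then refine the trivial union bound by a pairwise overlap estimate — is a natural one. However, there is a real gap in the step where you combine the Jensen overlap bound with the trivial bound via a case split on whether $|X_j|$ is close to $M_n$. The two regimes do not cover all cases.

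The difficulty is that the Jensen estimate $|X_i\cap X_j|\geq \frac{|X_i|}{\ell_j}-\frac{N(\ell_j-1)}{2\ell_j}$ degrades like $1/\ell_j$, while the corresponding drop in $|X_j|$ grows only linearly in $t_j:=\ell_j-k$: the maximum of $|X_j|$ given $\ell_j=k+t$ is $(k-1)\binom{n}{2}+\binom{n/2-t}{2}$, so the deficit $d_j:=M_n-|X_j|$ satisfies $d_j\gtrsim t_j n/4$ for $t_j\ll n$, which is small compared to the $\Theta(n^2)$ overlap you lose. Concretely, take $k=4$, $n=1000$, and let every color class be $3$ cliques of size $1000$, one clique of size $1000-2t$, and $t$ singletons with, say, $t=10$. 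Then each $|X_j|\approx M_n-5000$ (so $\sum_j d_j\approx 2\cdot10^4$, far below the required $\Delta':=kM_n-\binom{N}{2}+n^2/32\approx 4\cdot 10^5$), every $\ell_j=14$, and the Jensen estimate for any pair gives $|X_i\cap X_j|\gtrsim \frac{M_n}{14}\approx 1.15\cdot10^5$, so two pairs yield about $2.3\cdot 10^5$. The total $\sum d_j + (\text{Jensen savings})\approx 2.5\cdot 10^5$ falls well short of $\Delta'$. So neither your "near-extremal" case (Jensen) nor your "$|X_j|$ noticeably below $M_n$" case (trivial) handles this configuration, and the same issue persists over a wide range of $t$.

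The root cause is that when $\ell_j$ is inflated by many tiny components, Jensen averages over all $\ell_j$ parts and becomes far too pessimistic; the actual overlap is concentrated in the few components of size close to $n$, and you would need to restrict the convexity argument to those large parts (or otherwise exploit their structure) to recover a $\Theta(n^2)$ saving. This is essentially what the paper's proof does, though by a different mechanism: it fixes an edge-maximal $G$, observes that the components of the densest color must then be cliques, and deduces that every other color class is $(k+1)$-partite on each of its components; this structural fact directly caps $e(G_R)\leq(1-\frac{1}{k+1})(k-\frac34)\frac{n^2}{2}$ for every non-densest color $R$ and closes the bound. If you want to stay with the pure counting route, you would need to replace the Jensen bound over all $\ell_j$ parts by one over only the parts of size $\geq n/2$ (of which there are at most $2k-1$), and separately account for the mass in small parts; as written, the proposal does not prove the lemma.
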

\begin{proof}
	
	For sake of contradiction we assume there exists a graph $G$ on $(k-1/2)n$ vertices with a $k$-coloring such that no color class contains a component of size larger than $n$ and 
	 \[e(G) > \binom{v(G)}{2}-\frac{n^2}{32}.\numberthis\label{eq:eGsmallcontra}\]  
	 Additionally, let $G$ be the graph such that the number of edges is maximal among all graphs satisfying the property above.
	Choose any color from the graph and without loss of generality we assume it is blue.
	 Over all possible colorings satisfying that no color class has a component of size larger than $n$ we choose the coloring which maximizes the number of blue edges, i.e.~blue is the densest color. Let $G_B$ be the subgraph of $G$ induced by the blue edges. We take a closer look at the structure of the blue graph. 
	
	\begin{claim}
		The number of components in $G_B$ is either $k$ or $k+1$.
	\end{claim}
	\begin{proof} 
		 If we had only $k-1$ components, then we could cover at most $(k-1)n<(k-1/2)n$ vertices. Hence we can be sure we have at least $k$ components. By edge maximality of $G$ and the choice of the coloring we cannot have more than one component of size less than $n/2$ in the blue graph, as otherwise we could add or recolor any edge between the two components. But if we had $k+2$ components, then by convexity and the fact that there is at most one component of size less than $n/2$, the number of edges is maximized when we have $k-3$ cliques of size $n$ , one clique of size $n-1$, $3$ cliques of size $n/2$ and one isolated vertex. 
		 Then 
		\begin{align*}
		e(G_B)&\leq(k-2)\binom{n}{2}+3\binom{n/2}{2}\leq (k-2)\frac{n^2}{2} + 3\frac{n^2}{8} = \left(k-\frac{5}{4} \right) \frac{n^2}{2}.
		\end{align*}
		
		As blue was the densest color, the total number of edges is at most $k\cdot e(G_B)$. We conclude

		$$e(G)\leq k\left(k-\frac{5}{4} \right) \frac{n^2}{2}.$$

		A simple algebraic transformation gives
		$$e(G)\leq\left(k^2-k+\frac{1}{4}-\frac{k-1/2}{n}\right)\frac{n^2}{2}-\left(\frac{k}{4}+\frac{1}{4}-\frac{k-1/2}{n}\right)\frac{n^2}{2}.$$
	Using that the complete graph on $(k-1/2)n$ vertices has $e(K_{v(G)})=\binom{v(G)}{2}=\frac{n^2}{2}\left(k^2-k+\frac{1}{4}-\frac{k-1/2}{n}\right)$ edges and assuming $n\geq 4$ we get
		$$e(G)\leq \binom{v(G)}{2}- \frac{n^2}{16}.$$

	 Thus, $G$ having at least $k+2$ components contradicts our assumption \eqref{eq:eGsmallcontra} so we conclude that $G_B$ has either $k$ or $k+1$ components.
		
	\end{proof}
	By convexity and our previous observation that the blue graph has either $k$ or $k+1$ components, we obtain the number of blue edges is maximized when there are $k$ components: $k-1$ of size $n$ and one of size $n/2$. In this case we have 
	\begin{align*}
	e(G_B)&\leq (k-1)\frac{n^2}{2}+\frac{n^2}{8}=\left(k-\frac{3}{4}\right)\frac{n^2}{2}.
	\end{align*}
	
Let us now see what this implies for the other components. Consider the color with the second most edges, say, red. Let $G_R$ be the subgraph of $G$ induced by the red edges. By the choice of $G$ and the coloring, the blue components must be cliques. 
Thus for all components $C_B\subseteq G_B$ and $C_R\subseteq G_R$, the edges with both endpoints in $V(C_B)\cap V(C_R)$ are blue. Observe that as the blue color has at most $k+1$ components, this implies that we can view the red graph as a union of $(k+1)$-partite components, where the number of edges is maximized when all components are complete $(k+1)$-partite where every part has the same size. We observe that by the same reasons as seen for the blue graph, the red graph cannot have fewer than $k$ components.
	We conclude that the number of edges in the red graph is maximized, when there are again $k$ components: $k-1$ of size $n$ and one of size $n/2$. As $G_R$ is $(k+1)$-partite we know that a component of size $n$ has at most $\left(\left(1-\frac{1}{k+1}\right)\frac{n^2}{2}\right)$ edges. We get
	\begin{align*}
	e(G_R)&\leq \left(1-\frac{1}{k+1}\right)\left((k-1)\frac{n^2}{2}+\frac{1}{4}\frac{n^2}{2}\right)\\
	&\leq\left(1-\frac{1}{k+1}\right) \left( k-\frac{3}{4}\right) \frac{n^2}{2}.
	\end{align*}
	
	As red was the second densest color, the number of edges in every other color is at most the number of red edges. We conclude for the total number of edges	
	
	\begin{align*}
	e(G)&\leq e(G_B)+(k-1)\cdot e(G_R)\\
	&\leq \left(k-\frac{3}{4}\right)\frac{n^2}{2}+(k-1)\left(1-\frac{1}{k+1}\right) \left( k-\frac{3}{4}\right) \frac{n^2}{2}.
	\end{align*}	
	Simplifying gives    
	\begin{align*}
	e(G)\leq \left(k^2-\frac{7}{4}k+\frac{11}{4}-\frac{7}{2(k+1)}\right)\frac{n^2}{2}.
	\end{align*}
	Using the fact that the graph $G$ has $(k-\frac{1}{2})n$ vertices, $n\geq 4$ and $k\geq 4$, we conclude the proof of the lemma with
	$$e(G)\leq \binom{v(G)}{2}-\frac{n^2}{32}.$$

\end{proof}
\section{Proof of the Main Theorem}\label{sec:mainproof}

The main part of this paper is the proof of Theorem~\ref{thm:main} about finding monochromatic connected 
matchings in $k$-edge-colored dense graphs. We restate the theorem for convenience.\\
\\
\textbf{Theorem 1.} \textit{
	Let $k\geq 4$ be a positive integer and  $\varepsilon,\delta$ constants, such that $0< \varepsilon \leq 1/2$ and  $0\leq\delta<\frac{\varepsilon^3}{3k^2}$. Then for every even integer $n\geq 4$, every $k$-colored graph $G$ with $v(G)> (k-1/2+\varepsilon)n$ and $e(G)\geq (1-\delta)\binom{v(G)}{2}$ has a monochromatic connected matching of size $n/2$.  }\\

In the previous section we stated all the tools we need to prove Theorem~\ref{thm:main}, so all that remains is to put all the lemmas together.

\begin{proof}[Proof of Theorem~\ref{thm:main}]
For $0< \varepsilon \leq \frac{1}{2}$, let $\alpha=1/2-\varepsilon$ and $\delta<\frac{\varepsilon^3}{3k^2}$. We proceed to prove Theorem~\ref{thm:main} by contradiction. Let $G$ be an edge maximal graph on $(k-\alpha)n$ vertices with the property that there is a $k$-coloring of the edges of $G$ that avoids a monochromatic connected matching of size $n/2$. 
 Assume $G$ has many edges, \[e(G)\geq (1-\delta)\binom{v(G)}{2} > \binom{v(G)}{2}-\delta k^2\frac{n^2}{2}.\label{eq:maincontra}\numberthis\]
 We then derive a contradiction by showing that $G$ cannot have enough edges. 

Firstly, we show that the number of vertices of low degree in $G$ is small. Let $V_{\ell}$ be all vertices $v\in V(G)$ with $\deg(v)<(k-1/2)n$. 
\begin{claim}
	$|V_{\ell}|\leq \frac{\delta k^2 n}{\varepsilon}.$
	\label{cl:main1}
\end{claim}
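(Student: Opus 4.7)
The plan is to use a straightforward double-counting argument on the non-edges of $G$. The density hypothesis $e(G) \geq (1-\delta)\binom{v(G)}{2}$ means the number of non-edges in $G$ is at most $\delta\binom{v(G)}{2} \leq \delta\, v(G)^2 / 2$, and since $v(G) = (k-1/2+\varepsilon)n \leq kn$ by the assumption $\varepsilon \leq 1/2$, this is at most $\delta k^2 n^2/2$. Counting each non-edge from both of its endpoints, the sum of non-degrees satisfies
\[
\sum_{v \in V(G)} \bigl(v(G)-1-\deg(v)\bigr) \;\leq\; \delta k^2 n^2.
\]

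The second step is to lower bound the contribution of $V_\ell$ to this sum. For any $v \in V_\ell$ we have $\deg(v) < (k-1/2)n$; since $n$ is even, $(k-1/2)n$ is an integer, so in fact $\deg(v) \leq (k-1/2)n - 1$. The non-degree of $v$ is therefore at least
\[
v(G) - 1 - \deg(v) \;\geq\; (k-1/2+\varepsilon)n - 1 - \bigl((k-1/2)n - 1\bigr) \;=\; \varepsilon n.
\]
Restricting the previous sum to $V_\ell$ and combining with the upper bound yields $|V_\ell|\cdot \varepsilon n \leq \delta k^2 n^2$, i.e.\ $|V_\ell| \leq \delta k^2 n/\varepsilon$, which is exactly the claim. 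There is no real obstacle here; the only thing worth noting is the small integrality observation that upgrades the strict inequality $\deg(v) < (k-1/2)n$ to $\deg(v) \leq (k-1/2)n - 1$ and thereby lets us extract the clean lower bound $\varepsilon n$ on the non-degree.
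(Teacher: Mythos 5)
Your proof is correct and follows essentially the same double-counting argument as the paper, bounding the missing edges from below by the contribution of $V_\ell$ and from above by the density hypothesis. The only difference is that you spell out the integrality observation ($(k-1/2)n \in \mathbb{Z}$ since $n$ is even) that upgrades the non-degree lower bound to exactly $\varepsilon n$, a step the paper leaves implicit.
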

\begin{proof}

Observe that every vertex in $V_\ell$ misses at least $\varepsilon n$ incident edges. Thus we can find a lower bound on the number of edges missing in $G$ compared to the complete graph $K_{v(G)}$ by $\frac{|V_{\ell}|\varepsilon n}{2}$. As we cannot miss more than $\delta k^2\frac{n^2}{2}$ edges we conclude $|V_{\ell}|\leq \frac{\delta k^2 n}{\varepsilon}.$

\end{proof}

Observe next that every vertex in the set $V(G)\setminus V_{\ell}$ has degree at least $(k-1/2)n$. Recall that, by Lemma~\ref{lem:structure}, we can partition every color class $G_i$ into $S_i\cup Q_i\cup I_i$, such that all vertices with very large degree are in $S_i$ and all vertices in $I_i$ have degree less than $n/2$ and all vertices in $Q_i$ have degree less than $n$. Let $v\in V(G)\setminus V_{\ell}$ be a vertex which is in the $I_i$ for some color $i$ and thus has degree less than $n/2$ in this color. If this vertex was not in $S_j$ for any color $j$, then it has a maximum degree of $n-1$ in every other color, thus $\deg(v)< n/2 +(k-1)(n-1) < (k-1/2)n$,  contradicting that $v\in V(G)\setminus V_{\ell}$. We conclude that no vertex in $V(G)\setminus V_{\ell}$ by Definition~\ref{defi:Vertexclass} is small and thus every vertex in $V(G)\setminus V_{\ell}$ is either strong or $Q$-saturated, meaning it is in $S_i$ for some color $i$ or it is in $Q_i$ for all colors $1\leq 1\leq k$. 

Secondly, we show that the number of strong vertices in $G$ also has to be small. Let $V_s$ be the set of all strong vertices in $V(G)\setminus V_{\ell}$ and let $\beta=\frac{|V_s|}{n}$. 
\begin{claim}
	$\beta\leq\frac{2\delta k^2}{\varepsilon^2}. $
	\label{cl:main2}
\end{claim}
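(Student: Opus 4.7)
The plan is to apply Corollary~\ref{cor:lossFv} (i.e.\ $\sum_{v\in V(G)} F(v) \leq F(G)$) together with the density hypothesis and Claim~\ref{cl:main1}. The crucial idea is to lower-bound $\sum_v F(v)$ carefully so that the dominant $\Theta(n^2)$ part of the natural upper bound on $F(G)$ cancels, leaving only a $\delta$-dependent residual.

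For the lower bound on $\sum_v F(v)$, every strong vertex contributes exactly $(n-1)/4$, and every $Q$-saturated vertex contributes $k(n-1)/2 - \deg(v)/2$. Using only the trivial estimate $\deg(v) \leq v(G)-1$, every $Q$-saturated vertex satisfies
\[ F(v) \geq \frac{k(n-1) - (v(G)-1)}{2} = \frac{\alpha n - k + 1}{2}, \]
where $\alpha = 1/2 - \varepsilon$. Since the paragraph preceding the claim shows that every vertex in $V(G)\setminus V_\ell$ is either strong or $Q$-saturated, the total number of $Q$-saturated vertices is at least $v(G) - |V_s| - |V_\ell|$, and hence
\[ \sum_v F(v) \geq |V_s|\cdot\frac{n-1}{4} + (v(G) - |V_s| - |V_\ell|)\cdot\frac{\alpha n - k + 1}{2}. \]

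For the upper bound on $F(G) = k(n-1)v(G)/2 - e(G)$, I would substitute $e(G) \geq (1-\delta)\binom{v(G)}{2}$ to get
\[ F(G) \leq \frac{v(G)(\alpha n - k + 1)}{2} + \delta\binom{v(G)}{2}. \]
Plugging both bounds into Corollary~\ref{cor:lossFv}, the $v(G)(\alpha n - k + 1)/2$ terms cancel and rearranging gives
\[ |V_s|\left[\frac{n-1}{4} - \frac{\alpha n - k + 1}{2}\right] \leq |V_\ell|\cdot\frac{\alpha n - k + 1}{2} + \delta\binom{v(G)}{2}. \]
The coefficient of $|V_s|$ simplifies to $(2\varepsilon n + 2k - 3)/4$, which is at least $\varepsilon n/2$ for $k\geq 4$. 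Substituting $|V_\ell| \leq \delta k^2 n/\varepsilon$ from Claim~\ref{cl:main1}, using the identity $\alpha + \varepsilon = 1/2$, and bounding $v(G)^2 \leq k^2 n^2$, the right-hand side becomes at most $\delta k^2 n^2/(4\varepsilon)$. Dividing yields $|V_s| \leq \delta k^2 n/(2\varepsilon^2)$, safely inside the bound $\beta \leq 2\delta k^2/\varepsilon^2$.

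The main obstacle is arranging the cancellation: the trivial bound $F(v) \geq 0$ for $Q$-saturated vertices gives only $|V_s|(n-1)/4 \leq F(G)$, and $F(G)$ is generically of order $(k-\alpha)\alpha n^2$, which is weaker than what is needed by roughly a factor $1/\delta$. Extracting the uniform lower bound $(\alpha n - k + 1)/2$ for every $Q$-saturated vertex, and using that such vertices comprise almost all of $V(G)$ (the deficit being controlled by $|V_s|$ itself together with $|V_\ell|$ from Claim~\ref{cl:main1}), is what allows their collective contribution to absorb the leading structural part of $F(G)$ so that only a $\delta$-scale residual survives.
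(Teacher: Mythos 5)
Your proof is correct and follows essentially the same route as the paper: both lower-bound $\sum_v F(v)$ by the same per-vertex contributions (the strong vertices give $\frac{n-1}{4}$, and the $Q$-saturated vertices give $\frac{\alpha n-k+1}{2}$ via $\deg(v)\leq v(G)-1$), and both compare this against the bound coming from the density hypothesis. The paper phrases the computation in terms of $e(G)$ while you phrase it directly in terms of $F(G)$, but since $F(G) = k\frac{n-1}{2}v(G)-e(G)$ this is only a cosmetic change of bookkeeping; the cancellation of the leading $\Theta(n^2)$ term is the same, and your identity $\frac{\alpha}{\varepsilon}+1 = \frac{1}{2\varepsilon}$ recovers the same final estimate (you even land a factor-four better, at $\beta\leq\frac{\delta k^2}{2\varepsilon^2}$).
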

\begin{proof}
Remember that we previously saw that we can use the function $F(v)$ from Definition~\ref{def:lossfctF} to capture the loss of edges in a graph caused by vertex $v\in V(G)$. By definition for $Q$-saturated vertices we have 
\begin{align*}
	F(v)&= k\cdot\frac{n-1}{2} -\frac{\deg(v)-1}{2}.\\
\end{align*}
As $\deg(v)\leq v(G)-1= (k-\alpha)n-1$ we conclude that for a $Q$-saturated vertex $v\in V(G)$
\begin{align*}
F(v)&\geq k\cdot\frac{n-1}{2} - \frac{(k-\alpha)n-1}{2}=\alpha\frac{n}{2}-\frac{k-1}{2}.\numberthis\label{eq:FQsatmain}
\end{align*}
By definition of $F$ we have $F(G)=k\cdot\frac{n-1}{2} v(G)-e(G)$  and Corollary~\ref{cor:lossFv} states $\sum_{v\in V(G)}F(v)\leq F(G)$, so we can bound the number of edges in $G$ by
\[e(G)\leq k\cdot\frac{n-1}{2} v(G)- \sum_{v\in V(G)}F(v).\]

Next, we use the loss function $F$ of the vertices to find an upper bound on the number of edges in $G$. All small vertices in $G$ have to be in $V_{\ell}$ and thus there cannot be too many of them, remember for strong vertices we have $F(v)=\frac{n-1}{4}$ by Definition~\ref{def:lossfctF}. Using this, Equation~\eqref{eq:FQsatmain} for $Q$-saturated vertices and the fact that there are only few small vertices allows us to find an upper bound on the number of strong vertices. We get
\begin{align*}
	e(G) &\leq k\cdot\frac{n-1}{2}\left(k-\alpha\right)n -\sum_{v\in V(G)} F(v)\\
	&\leq k\cdot\frac{n-1}{2}(k-\alpha)n -\oversetbrace{\substack{\text{vertices in } V(G)\setminus V_{\ell}\\\text{which are $Q$-saturated }}}{\left(k-\alpha -\beta - \frac{\delta k^2}{\varepsilon}\right)n\left(\alpha\frac{n}{2}-\frac{k-1}{2}\right)}-\oversetbrace{\substack{\text{vertices in } V(G)\setminus V_{\ell}\\\text{which are strong}}}{\beta n\left(\frac{n-1}{4}\right)}.
	%
\end{align*}
Now it follows from algebraic transformations and simple estimations that
\begin{align*}
	e(G) &\le \frac{n^2}{2}\left(k^2-2\alpha k +\alpha^2 -\frac{k-\alpha}{n}\right) -\frac{n^2}{2} \left(\frac{\beta}{2}-\alpha\beta -\frac{\delta k^2}{\varepsilon}\alpha\right).\\
\end{align*}
Observe that $\binom{v(G)}{2}=\frac{n^2}{2}\left(k^2-2k\alpha +\alpha^2-\frac{k-\alpha}{n}\right)$. Together with $\alpha=1/2-\varepsilon$ this gives
	$$e(G)\leq\binom{v(G)}{2}-\frac{n^2}{2}\left(\varepsilon\beta-\frac{\delta k^2}{\varepsilon}\alpha\right).$$
By the assumption $e(G) \ge \binom{v(G)}{2}-\frac{ \delta k^2  n^2}{2}$ we conclude that $ \varepsilon\beta-\frac{\delta k^2}{\varepsilon}\alpha \leq \delta k^2.$
Using $\alpha \leq 1$ and $\varepsilon \leq 1$ we get a bound on the size of $\beta$.
$$	\beta \leq  \frac{\delta k^2}{\varepsilon} + \frac{\delta k^2}{\varepsilon^2}\alpha\leq \frac{2\delta k^2}{\varepsilon^2} .$$
This concludes the proof of Claim 2. 
\end{proof}
We look at the induced subgraph on the remaining vertices i.e.~$V(G)\setminus\{V_\ell\cup V_s\}$. Observe that we get using that $\delta<\frac{\varepsilon^3}{3k^2}$, $\alpha=1/2-\varepsilon$ and Claims~\ref{cl:main1} and~\ref{cl:main2}
\begin{align*}
	\left|V(G)\setminus\left(V_{\ell}\cup V_s\right)\right|& \geq v(G) - \frac{\delta k^2n}{\varepsilon} -\frac{2\delta k^2n}{\varepsilon^2} \\\
	&\geq v(G) -  \frac{3\delta k^2n}{\varepsilon^2}\\
	&> v(G)- \varepsilon n = (k-1/2)n.
\end{align*}

We now remove $\varepsilon n$ vertices from the graph, including the vertices from $V_s$ and $V_{\ell}$. We are left with a graph $G'$ on $(k-1/2)n$ vertices such that every vertex was $Q$-saturated in $G$. This means we remove from all components in all colors $i$ the vertices in $I_i$ and $S_i$ of the corresponding partition and we are thus left with only monochromatic components of size at most $n-1$. This allows us to apply Lemma~\ref{lem:eGsmallcomp} which gives us $e(G')\leq \binom{v(G')}{2}-\frac{n^2}{32}\leq \binom{v(G')}{2}- \delta k^2 \frac{n^2}{2}$. Which follows by the choice of $\varepsilon$ and $\delta$.

From here we conclude that there are at least $\delta k^2 \frac{n^2}{2}$ edges not present in $G'$. But then, even without considering the edges missing in the rest of $G$ we can conclude that $e(G)\leq \binom{v(G)}{2}-\delta k^2 \frac{n^2}{2}<(1-\delta)\binom{v(G)}{2}$ edges, yielding the desired contradiction.

\end{proof}

\section{Proof of Lemma~\ref{lem:structure}}

\label{sec:structure}

In the following we derive some properties of connected graphs without large matchings. For convenience we restate the lemma\\
\\
\textbf{Lemma~\ref{lem:structure}.} \textit{For every connected graph $G=(V,E)$ without a matching of size $n/2$ there is a partition $S\cup Q\cup I$ of the vertex set such that \begin{enumerate}
		\item $|Q|+ 2|S| =  min\{v(G),n-1\} $, 
		\item $I$ is an independent set; if $v(G)\leq n-1$, then $I=\emptyset$,
		\item every vertex in $Q$ has at most one neighbor in $I$,
		\item every vertex in $I$ has degree less than $n/2$. 
	\end{enumerate}}
	\vspace{1em}
For a graph $G$ and $A \subseteq V(G)$ we define $G\setminus A$ as the subgraph of $G$ where all vertices from $A$ and their incident edges are removed.
 For a matching $M\subseteq E$ we call all vertices which are not incident to any edge in $M$ the unmatched vertices in $M$. Furthermore, we denote by $q(G\setminus S)$ the number of odd components in $G\setminus S$. 
We use a generalization of Tutte's Theorem by Berge~\cite{berge1958matchings} in our proof.
\begin{theorem}[\textbf{Berge~\cite{berge1958matchings}}]
	Let $G=(V,E)$ be a graph. For any set $S\subseteq V$ and any matching $M$, the number of unmatched vertices in $M$ is at least $\q{G\setminus S}-|S|$. Moreover, there exists a set $S\subset V$ such that every maximum matching of $G$ misses exactly $\q{G\setminus S} - |S|$ vertices.
	\label{thm:TutteBerge}
\end{theorem}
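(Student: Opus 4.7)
The plan is to derive the partition from a tight Tutte--Berge set. First I would handle the trivial case $v(G)\leq n-1$ by setting $S=I=\emptyset$ and $Q=V(G)$: condition~\eqref{lemstruc:Cond:1} reduces to $|Q|=v(G)=\min\{v(G),n-1\}$, conditions~\eqref{lemstruc:Cond:3} and \eqref{lemstruc:Cond:4} are vacuous, and \eqref{lemstruc:Cond:2} holds since $I$ is empty. Assume henceforth that $v(G)\geq n$.

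Let $\nu=\nu(G)$ denote the matching number. Since $G$ has no matching of size $n/2$ and $n$ is even, I have $\nu\leq n/2-1$. Next I would apply Theorem~\ref{thm:TutteBerge} to obtain a set $T\subseteq V(G)$ for which $\q{G\setminus T}-|T|=v(G)-2\nu\geq v(G)-n+2$. Combining $\q{G\setminus T}\leq v(G)-|T|$ with the identity above yields $|T|\leq \nu<n/2$. Moreover, any odd component of $G\setminus T$ of size at least $3$ has at most $n-1$ vertices: otherwise its internal near-perfect matching, combined with pairings of $T$-vertices to distinct odd components, would produce a matching of size $\geq n/2$ in $G$.

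My proposed partition is $S:=T$, with $I$ obtained by picking one vertex from each of $|I|:=v(G)+|S|-(n-1)$ distinct odd components of $G\setminus T$, and $Q$ consisting of all remaining vertices of $V(G)\setminus T$. The choice is feasible since $v(G)+|T|-(n-1)<|T|+v(G)-n+2\leq \q{G\setminus T}$ (using $\nu\leq n/2-1$). Condition~\eqref{lemstruc:Cond:1} is enforced by construction; condition~\eqref{lemstruc:Cond:2} is immediate because distinct $I$-vertices lie in distinct components of $G\setminus T$ and hence have no $G$-edges between them; condition~\eqref{lemstruc:Cond:3} holds because each component of $G\setminus T$ contributes at most one vertex to $I$ while having no $G$-edges to vertices in other components of $G\setminus T$. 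Whenever an $I$-representative is chosen from a singleton odd component, all its $G$-neighbors lie in $T$, so its degree is at most $|T|<n/2$, verifying condition~\eqref{lemstruc:Cond:4} for such a choice.

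The hardest part will be condition~\eqref{lemstruc:Cond:4} when the number of singleton odd components is smaller than $|I|$, so that some representatives must come from non-singleton odd components whose vertices could a priori have degree exceeding $n/2$. I plan to resolve this via the Gallai--Edmonds refinement of Theorem~\ref{thm:TutteBerge}: the odd components of $G\setminus T$ are factor-critical, so for each problematic non-singleton odd component $D_i$ I would absorb one vertex $w_i\in D_i$ into $S$, after which $D_i\setminus\{w_i\}$ has a perfect matching and splits into even components of the updated $G\setminus S$, so $D_i$ is no longer an odd component. The delicate bookkeeping is that this modification also shifts the target $|I|=v(G)+|S|-(n-1)$; the essential step is to show that the Tutte--Berge slack $\q{G\setminus T}-|T|-(v(G)-n+2)$ is exactly enough, after absorbing all problematic components, to preserve equality in \eqref{lemstruc:Cond:1} while every $I$-vertex ends up isolated in $G\setminus S$ and so satisfies \eqref{lemstruc:Cond:4}.
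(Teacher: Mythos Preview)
Your setup matches the paper's: both handle $v(G)\leq n-1$ trivially and, for $v(G)\geq n$, fix a tight Tutte--Berge set, pick one $I$-representative from each of sufficiently many odd components of $G\setminus S$, and put everything else into $Q$; conditions~\eqref{lemstruc:Cond:1}--\eqref{lemstruc:Cond:3} then follow exactly as you argue.

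The genuine gap is condition~\eqref{lemstruc:Cond:4}. Your Gallai--Edmonds absorption scheme moves in the wrong direction. Each time you push a vertex $w_i$ from a problematic odd component $D_i$ into $S$, the target $|I|=v(G)+|S|-(n-1)$ \emph{increases} by one, so you need one \emph{more} $I$-representative, not one fewer. Meanwhile, since $D_i$ is factor-critical, $D_i\setminus\{w_i\}$ splits only into even pieces, so no new singleton odd components appear. The supply of singletons therefore stays fixed while the demand grows, and the ``slack'' $\q{G\setminus T}-|T|-(v(G)-n+2)$ cannot close the gap. Already for a universal vertex joined to four disjoint triangles (with $n=12$) there are no singleton odd components at all, yet $|I|=3$ representatives are required.

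The paper sidesteps all of this with one clean observation you are missing: when choosing which odd components contribute to $I$, simply \emph{exclude the largest one}, say $Q_1$. Then any $v\in I$ lies in some odd component $Q_i$ with $|Q_i|\leq|Q_1|$, and since $Q_1$ and $Q_i\setminus\{v\}$ are disjoint subsets of $Q$ one gets $|Q_i|-1\leq |Q|/2$. Hence
\[
\deg(v)\;\leq\;|S|+(|Q_i|-1)\;\leq\;|S|+\frac{|Q|}{2}\;=\;\frac{2|S|+|Q|}{2}\;=\;\frac{n-1}{2}\;<\;\frac{n}{2},
\]
which yields~\eqref{lemstruc:Cond:4} immediately, with no modification of $S$ and no appeal to factor-criticality.
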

\begin{proof}[Proof of Lemma~\ref{lem:structure}]
We distinguish two cases in the proof.
\begin{case}[$v(G)\leq n-1$]
	In this case we set $Q=V$ and $S=I=\emptyset$. It can be easily verified that all four conditions are satisfied in this case.  
\end{case}
\begin{case}[$v(G)>n-1$]

Let $M$ be a maximum matching in $G$ and let $V_M$ be all vertices covered by the matching. As $G$ has no matching covering $n$ vertices and $n$ is even, $2|M| \leq n-2$. 
Then we know by Theorem~\ref{thm:TutteBerge} that there exists a subset of $S\subset V$ such that $|V\setminus V_M|= \q{G\setminus S} -|S|$.

Let $\mathcal{Q}=\{Q_1,\ldots,Q_{q(G\setminus S)}\}$ be the set of all odd components in $G\setminus S$ where we assume without loss of generality that $|Q_1|\geq |Q_2|\geq \ldots\geq |Q_{q(G\setminus S)}|$. Let $I$ be be an arbitrary set of vertices from the odd components such that $|I\cap Q_1|=0$ and $|I\cap Q_i|=1$ for all $2\leq i\leq q(G\setminus S)$. Then clearly we have that $I$ is an independent set (proving \eqref{lemstruc:Cond:2}). Note that we have 
\[|I|= q(G\setminus S) -1.\]
Let $Q = V\setminus (I\cup S)$ be the set of all remaining vertices. Then $S\cup Q \cup I$ is clearly a partition of $V$ and thus $|V|=|S|+|Q|+|I|$. 
By Theorem~\ref{thm:TutteBerge} we know
\begin{align*}
	n-2\geq 2|M|&=|V|-(q(G\setminus S)-|S|)\\
	&= |S|+|Q|+|I| - q(G\setminus S) + |S|\\
	&=2|S|+|Q|+1.
\end{align*} 
We conclude $n-1\geq 2|S|+|Q|$. In case $n-1>2|S|+|Q|$ we move vertices from $I$ to $Q$ until the above holds with equality. As we are in the case $v(G) > n-1$ this proves \eqref{lemstruc:Cond:1}.

To see that \eqref{lemstruc:Cond:3} holds, recall that $I$ contains at most one vertex from each odd component. Every vertex in $Q$ can thus be adjacent to at most one of the vertices in $I$, the one which was in the same component in $G\setminus S$. As we by construction did not remove any vertex from the largest odd component, no vertex in $I$ can have more that $\frac{|Q|}{2}$ neighbors in $Q$. Together with the vertices from $S$ we conclude for $v\in I$ that \[\deg(v)\leq |S|+\frac{|Q|}{2}=\frac{n-1}{2},\]
proving \eqref{lemstruc:Cond:4}. 
\end{case}
\end{proof}
\section{Proof of Lemma~\ref{lem:fconnloss} and Corollary~\ref{cor:lossFv}}
\label{sec:LemlossCorrloss}

\setcounter{case}{0}
Next we proof Lemma~\ref{lem:fconnloss}, recall that 
$$ f(G)=\frac{n-1}{2}v(G) -e(G)$$
and for every connected component $C$ and every vertex $v\in V(C)=S\cup Q\cup I$ we have
\begin{align*}
f(v)=\begin{cases}
\frac{n-1}{4},&\text{if } v\in S,\\
\frac{n-1}{2}-\frac{\deg(v)}{2},&\text{if }v\in Q,\\
0,& \text{if }v\in I.
\end{cases}
\end{align*} 
\\
\textbf{Lemma~\ref{lem:fconnloss}.} \textit{
Let $G$ be a graph without a connected matching of size $n/2$. 
Then $$\sum_{v\in V(G)}f(v)\leq f(G).$$}

\begin{proof}[Proof of Lemma~\ref{lem:fconnloss}]
Let $G$ be a graph, not necessarily connected, and let $C_1,\ldots,C_m$ be the connected components of $G$. Every vertex and every edge is in exactly one component thus we get
\begin{align*}
	f(G)&=\frac{n-1}{2}v(G)-e(G)\\
	&=\sum_{i=1}^{m} \frac{n-1}{2}v(C_i)-e(C_i)\\
	&=\sum_{i=1}^{m}f(C_i).
\end{align*}
If we know that for every component we have $\sum_{v\in v(C_i)}f(v)\leq f(C_i)$, then 
\begin{align*}
	\sum_{v\in V(G)} f(v) = \sum_{i=1}^m\sum_{v\in v(C_i)}f(v)\leq \sum_{i=1}^{m}f(C_i) = f(G).
\end{align*}
We can thus without loss of generality assume $G$ is connected. This allows us to partition $G$ with Lemma~\ref{lem:structure}.
We distinguish two cases.
\begin{case}[$v(G)\leq n-1$]
	In this case all vertices are in $Q$ as by Lemma~\ref{lem:structure} \eqref{lemstruc:Cond:2} we know $I=\emptyset$ and thus $|Q|+2|S|=v(G)$ implies $S=\emptyset$. We clearly have  
	\begin{align*}
	f(G)&=\frac{n-1}{2}v(G) -e(G)= \sum_{v\in V(G)}\left(\frac{n-1}{2}-\frac{\deg(v)}{2}\right)=\sum_{v\in V(G)}f(v).
	\end{align*}
\end{case}

\begin{case}[$v(G)>n-1$]
As in the previous case we rewrite the function $f$
$$f(G) = \frac{n-1}{2}v(G) -e(G)= \sum_{v\in V(G)}\left(\frac{n-1}{2}- \frac{\deg(v)}{2}\right).$$
Let $V(G)= S\cup Q \cup I$. We now distinguish between vertices which are in $Q$, $S$ or $I$.
By definition
\begin{equation}\label{eq:qvertices}
\sum_{v\in Q} \left(\frac{n-1}{2} -   \frac{\deg(v)}{2}\right)    = \sum_{v\in Q} f(v).
\end{equation}
Then we know that $\deg(v)$ for $v\in S$ is at most $v(G)-1$ and $\deg(v)$ for $v\in I$ is at most $|S|$ plus at most $|Q|$ edges in total from the set $I$ to $Q$. Therefore, for vertices in $S$ and $I$ we have the inequalities
\begin{align*}
\sum_{v\in S}\left(  \frac{n-1}{2} -  \frac{\deg(v)}{2}  \right)   &\ge \sum_{v\in S} \left( \frac{n-1}{2} -  \frac{v(G)-1}{2}  \right),\text{ and}\\
\sum_{v\in I}\left(  \frac{n-1}{2} -   \frac{\deg(v)}{2}  \right) &\ge \sum_{v\in I} \left( \frac{n-1}{2} -  \frac{|S|}{2}   \right) - \frac{|Q|}{2}.
\end{align*}
Adding in the fact that $v(G) = |Q| + |I| + |S|$ and $n-1 = |Q|+2|S|$ we get
\begin{align*}
 \frac{n-1}{2} -  \frac{v(G)-1}{2}   &=  \frac{|S|-|I|+1}{2},\text{ and}  \\
 \frac{n-1}{2} -  \frac{|S|}{2}  &=  \frac{|Q|+|S|}{2} . 
\end{align*}
using the fact that $|I| \ge |S|+1$ we obtain
\begin{align*}
\sum_{v\in S\cup I}\left(\frac{n-1}{2}-\frac{\deg(v)}{2}\right)&\geq |S|  \frac{|S|-|I|+1}{2} + |I|  \frac{|Q|+|S|}{2}  - \frac{|Q|}{2} \\&\ge |S| \frac{|Q|+|S|}{2}   \ge |S|\frac{n-1}{4} \label{eq:svertices}.\numberthis
\end{align*}
Putting inequalities \eqref{eq:qvertices} and \eqref{eq:svertices} together finally gives
\begin{align*}
	f(G) &=  \sum_{v\in V(G)}\left( \frac{n-1}{2} - \frac{\deg(v)}{2}\right) \\&\ge \sum_{v\in Q} \left(\frac{n-1}{2} -   \frac{\deg(v)}{2}\right) + |S|\frac{n-1}{4} = \sum_{v\in V(G)} f(v).
\end{align*} 
This finishes the proof of Lemma~\ref{lem:fconnloss}.
\end{case}

\end{proof}
We deduce Corollary~\ref{cor:lossFv} from Lemma~\ref{lem:fconnloss}. For this consider $G$ to be a $k$-edge-colored graph. Recall that $G_i$ is the monochromatic induced subgraph in color $i$.
We denote the associated function from Definition~\ref{def:fG} and~\ref{def:fvW} for the graph $G_i$ in color $i$ with $f_i$.
\begin{proof}[Proof of Corollary~\ref{cor:lossFv}]
	
By definition of $F(G)$ and $f_i(G)$ we have that $F(G)=\sum_{i=1}^{k} f(G_i)$. We conclude that if $F(v)\leq \sum_{i=1}^{k} f_i(v)$, then this together with Lemma~\ref{lem:fconnloss} implies \begin{align*}
\sum_{v\in V(G)}F(v)\leq \sum_{v\in V(G)}\sum_{i=1}^{k} f_i(v)=\sum_{i=1}^{k} \sum_{v\in V(G)}f_i(v)\leq\sum_{i=1}^{k}f_i(G_i)=F(G).
\end{align*} 
In the following we show that $F(v)\leq \sum_{i=1}^{k} f_i(v)$ indeed holds. For this we look at $F(v)$ depending on the class of the vertex $v$. 

For every strong vertex $v$, we know that for at least one color $i$, $v\in S_i$, so we have $f_i(v)=\frac{n-1}{4}$. Hence then $F(v)=f_i(v)\leq \sum_{i=1}^{k} f_i(v)$ by the non-negativity of $f(v)$.\\

For every $Q$-saturated vertex by definition we have $f_i(v)=\frac{n-1}{2}-\frac{\deg_{G_i}(v)}{2}$. This means that for every $Q$-saturated vertex of $G$ 
\begin{align*}
	\sum_{i=1}^{k}f_i(v)&=\sum_{i=1}^{k}\left(\frac{n-1}{2} -\frac{\deg_{G_i}(v)}{2}\right)= k\cdot\frac{n-1}{2} -\frac{\deg_{G}(v)}{2} = F(v),
\end{align*}
since the graphs $G_i$ are edge-disjoint and their union is $G$.\\

For every small vertex we have, by non-negativity of $f(v)$, that $F(v) = 0 \leq \sum_{i=1}^{k}f_i(v)$.

\end{proof}

\section{Conclusion}
\label{sec:conclusion}

In this paper, we provided some insight into the behavior of graphs avoiding connected matchings. We introduced strong properties for large connected components without a matching of size $n/2$. Also these directly imply a better bound for the multicolor Ramsey numbers of paths and cycles.

The analysis of these large connected components is important because of the different extremal constructions that exist for the lower bound of $R_k(P_n)$. The bound by Yongqi et al.~\cite{yongqi2006new} uses large connected components. However, a construction using finite affine planes (see~\cite{Bierbrauer1987123}) shows that similar bounds can be achieved when all colors have small connected components. 

To prove the bound for $R_k(C_n)$, we first bound the Ramsey numbers for connected matchings (Theorem~\ref{thm:main}) and then conclude by applying Lemma~\ref{lem:CycMat}, which itself applies the regularity lemma. Because the regularity lemma is such a strong tool, it should be possible to deduce Ramsey bounds for other structures than the path or even cycle, such as bounded degree trees, by adjusting Lemma~\ref{lem:CycMat} (see~\cite{mota2015ramsey} for similar ideas with three colors). 

Although we now have some consideration for large connected components and the overlap of small components, we do not look at the overlap of large components. Considering this might lead to better bounds. We would be interested to see these ideas used to prove an upper bound matching the lower bound.

\subsection*{Acknowledgements}

We would like to thank Rajko Nenadov for bringing this problem to our attention. We also thank Miloš Trujić for many helpful comments.

\bibliographystyle{abbrv}
\bibliography{improvingramseypath}

\begin{thebibliography}{10}

\bibitem{balister2008connected}
P.~N. Balister, E.~Gy{\H{o}}ri, J.~Lehel, and R.~H. Schelp.
\newblock Connected graphs without long paths.
\newblock {\em Discrete Mathematics}, 308(19):4487--4494, 2008.

\bibitem{benevides20093}
F.~S. Benevides and J.~Skokan.
\newblock The 3-colored {R}amsey number of even cycles.
\newblock {\em Journal of Combinatorial Theory, Series B}, 99(4):690--708,
  2009.

\bibitem{berge1958matchings}
C.~Berge.
\newblock Sur le couplage maximum d'un graphe.
\newblock {\em C. R. Acad. Sci. Paris}, 247:258--259, 1958.

\bibitem{Bierbrauer1987123}
J.~Bierbrauer and A.~Gyárfás.
\newblock On (n, k)-colorings of complete graphs.
\newblock {\em Congr. Numer.}, 58:123--139, 1987.

\bibitem{conlon2009new}
D.~Conlon.
\newblock A new upper bound for diagonal ramsey numbers.
\newblock {\em Annals of Mathematics}, pages 941--960, 2009.

\bibitem{conlon2015recent}
D.~Conlon, J.~Fox, and B.~Sudakov.
\newblock Recent developments in graph {R}amsey theory.
\newblock {\em Surveys in combinatorics}, 424:49--118, 2015.

\bibitem{davies2017multicolour}
E.~Davies, M.~Jenssen, and B.~Roberts.
\newblock Multicolour {R}amsey numbers of paths and even cycles.
\newblock {\em European Journal of Combinatorics}, 63:124--133, 2017.

\bibitem{erdHos1959maximal}
P.~Erd{\H{o}}s and T.~Gallai.
\newblock On maximal paths and circuits of graphs.
\newblock {\em Acta Mathematica Academiae Scientiarum Hungarica},
  10(3-4):337--356, 1959.

\bibitem{figaj2007ramsey}
A.~Figaj and T.~{\L}uczak.
\newblock The {Ramsey} number for a triple of large cycles.
\newblock {\em arXiv preprint arXiv:0709.0048}, 2007.

\bibitem{gerencser1967ramsey}
L.~Gerencs{\'e}r and A.~Gy{\'a}rf{\'a}s.
\newblock On ramsey-type problems.
\newblock {\em Ann. Univ. Sci. Budapest. E{\"o}tv{\"o}s Sect. Math},
  10:167--170, 1967.

\bibitem{graham1990ramsey}
R.~L. Graham, B.~L. Rothschild, and J.~H. Spencer.
\newblock {\em Ramsey theory}, volume~20.
\newblock John Wiley \& Sons, 1990.

\bibitem{Gyarfas2007}
A.~Gy{\'a}rf{\'a}s, M.~Ruszink{\'o}, G.~N. S{\'a}rk{\"o}zy, and
  E.~Szemer{\'e}di.
\newblock Three-color ramsey numbers for paths.
\newblock {\em Combinatorica}, 27(1):35--69, Feb 2007.

\bibitem{gyarfas2007three}
A.~Gy{\'a}rf{\'a}s, M.~Ruszink{\'o}, G.~N. S{\'a}rk{\"o}zy, and
  E.~Szemer{\'e}di.
\newblock Three-color {R}amsey numbers for paths.
\newblock {\em Combinatorica}, 27(1):35--69, 2007.

\bibitem{luczak1999r}
T.~{\L}uczak.
\newblock {$R (C_n, C_n, C_n) \le (4+ o (1)) n$}.
\newblock {\em Journal of Combinatorial Theory, Series B}, 75(2):174--187,
  1999.

\bibitem{luczak2012multi}
T.~{\L}uczak, M.~Simonovits, and J.~Skokan.
\newblock On the multi-colored {R}amsey numbers of cycles.
\newblock {\em Journal of Graph Theory}, 69(2):169--175, 2012.

\bibitem{mota2015ramsey}
G.~O. Mota, G.~N. S{\'a}rk{\"o}zy, M.~Schacht, and A.~Taraz.
\newblock {R}amsey numbers for bipartite graphs with small bandwidth.
\newblock {\em European Journal of Combinatorics}, 48:165--176, 2015.

\bibitem{radziszowski1994small}
S.~P. Radziszowski et~al.
\newblock Small ramsey numbers.
\newblock {\em Electron. J. Combin}, 1(7), 1994.

\bibitem{ramsey1930problem}
F.~P. Ramsey.
\newblock On a problem of formal logic.
\newblock {\em Proceedings of the London Mathematical Society}, 2(1):264--286,
  1930.

\bibitem{sarkozy2016multi}
G.~N. S{\'a}rk{\"o}zy.
\newblock On the multi-colored {R}amsey numbers of paths and even cycles.
\newblock {\em The Electronic Journal of Combinatorics}, 23(3):P3--53, 2016.

\bibitem{szemeredi1975regular}
E.~Szemer\'edi.
\newblock Regular partitions of graphs.
\newblock In {\em Probl\`emes combinatoires et th\'eorie des graphes ({C}olloq.
  {I}nternat. {CNRS}, {U}niv. {O}rsay, {O}rsay, 1976)}, volume 260 of {\em
  Colloq. Internat. CNRS}, pages 399--401. CNRS, Paris, 1976.

\bibitem{yongqi2006new}
S.~Yongqi, Y.~Yuansheng, X.~Feng, and L.~Bingxi.
\newblock New lower bounds on the multicolor {R}amsey numbers {$R_r (C_{2m})$}.
\newblock {\em Graphs and Combinatorics}, 22(2):283--288, 2006.

\end{thebibliography}
\end{document}